\begin{document}

\def\pd#1#2{\frac{\partial#1}{\partial#2}}
\def\dfrac{\displaystyle\frac}
\let\oldsection\section
\renewcommand\section{\setcounter{equation}{0}\oldsection}
\renewcommand\thesection{\arabic{section}}
\renewcommand\theequation{\thesection.\arabic{equation}}

\def\Xint#1{\mathchoice
  {\XXint\displaystyle\textstyle{#1}}%
  {\XXint\textstyle\scriptstyle{#1}}%
  {\XXint\scriptstyle\scriptscriptstyle{#1}}%
  {\XXint\scriptscriptstyle\scriptscriptstyle{#1}}%
  \!\int}
\def\XXint#1#2#3{{\setbox0=\hbox{$#1{#2#3}{\int}$}
  \vcenter{\hbox{$#2#3$}}\kern-.5\wd0}}
\def\ddashint{\Xint=}
\def\dashint{\Xint-}

\newcommand{\cblue}{\color{blue}}
\newcommand{\cred}{\color{red}}
\newcommand{\be}{\begin{equation}\label}
\newcommand{\ee}{\end{equation}}
\newcommand{\beaa}{\begin{eqnarray}}
\newcommand{\bea}{\begin{eqnarray}\label}
\newcommand{\eea}{\end{eqnarray}}
\newcommand{\nn}{\nonumber}
\newcommand{\intO}{\int_\Omega}
\newcommand{\Om}{\Omega}
\newcommand{\cd}{\cdot}
\newcommand{\pa}{\partial}
\newcommand{\ep}{\varepsilon}
\newcommand{\uep}{u_{\eta}}
\newcommand{\vep}{v_{\eta}}
\newcommand{\Sep}{S_{\eta}}
\newcommand{\Lp}{L^p(\Om)}
\newcommand{\Lq}{L^q(\Om)}
\newcommand{\abs}{\\[2mm]}
\newcommand{\eps}{\varepsilon}
\newcommand{\norm}[2][ ]{\|#2\|_{#1}}
\newtheorem{thm}{Theorem}
\newtheorem{lem}{Lemma}[section]
\newtheorem{proposition}{Proposition}[section]
\newtheorem{dnt}{Definition}[section]
\newtheorem{remark}{Remark}[section]
\newtheorem{cor}{Corollary}[section]
\allowdisplaybreaks

\title{Boundedness in a three-dimensional chemotaxis-haptotaxis model}
\author{
Xinru Cao\footnote{caoxinru@gmail.com, supported by the Fundamental Research Funds for the Central Universities, and the Research Funds of Renmin University of China}\\
{\small Institute for Mathematical Sciences, Renmin University of China, 100872 Beijing, China}
}

\maketitle
\date{}
\begin{abstract}
This paper studies the chemotaxis-haptotaxis system
\beaa\nn
\left\{
\begin{array}{llc}
u_t=\Delta u-\chi\nabla\cdot(u\nabla v)-\xi\nabla\cdot(u\nabla w)+\mu u(1-u-w), &(x,t)\in \Omega\times (0,T),\\
v_t=\Delta v-v+u, &(x,t)\in\Omega\times (0,T),\\
w_t=-vw,&(x,t)\in \Omega\times (0,T)
\end{array}
\right.\quad\quad(\star)
\eea
under Neumann boundary conditions. Here $\Om\subset\mathbb{R}^3$ is a bounded domain with smooth boundary
and the parameters $\xi,\chi,\mu>0$. We prove that for nonnegative and suitably smooth initial data $(u_0,v_0,w_0)$, if $\chi/\mu$
is sufficiently small, ($\star$) possesses a global classical solution which is bounded in $\Om\times(0,\infty)$.
We underline that the result fully parallels the corresponding parabolic-elliptic-ODE system.
\end{abstract}

\section{Introduction}

The motion of cells moving towards the higher concentration of a chemical signal is called chemotaxis.
A classical mathematical model for this type of processes was initiated by Keller and Segel in the 1970's \cite{keller_segel},
and in a prototypical form this model writes
\beaa
\left\{
\begin{array}{llc}
u_t=\Delta u-\chi\nabla\cdot(u\nabla v), &(x,t)\in \Omega\times (0,T),\\
\displaystyle
v_t=\Delta v-v+u, &(x,t)\in\Omega\times (0,T).
\end{array}
\right.
\eea
Here $u$ is the density of cells, $v$ denotes the concentration of the chemical substance, and $\chi>0$
measures the sensitivity of chemotactic response. In recent 40 years,
a large quantity of literature has been devoted to study the global existence as well as singularity formation
in either finite or infinite time for this system \cite{Nagai-Senba-Yoshida,HW,win_JMPA}.
We also refer to \cite{H,HP,BBTW} for a broad overview.\\
Apart from this classical Keller-Segel system, a large number of variants has been proposed to describe taxis phenomena
in mathematical biology.
Among them, a model for tumor invasion mechanism was introduced by Chaplain and Lolas \cite{chan_lolas}.
In this model, tumor cells are assumed to produce a diffusive chemical substance,
the so-called matrix-degrading enzyme (MDE), which decays non-diffusive
static healthy tissue (ECM). It is observed that both the enzyme and the healthy tissue can attract the cancer cells
in the sense that the cancer cells bias their movement along the gradients of the concentrations of both ECM and MDE,
where the former of these processes, namely taxis toward a non-diffusible quantity, is usually referred as haptotaxis.\\
Additionally, the cancer cells compete for space with ECM,
and at the considered time scales moreover logistic-type cell kinetics need to be taken into account.
If futhermore the ability of ECM to spontaneously renew is included, the Chaplain-Lolas model becomes
\be{1.0}
\left\{
\begin{array}{llc}
u_t=\Delta u-\chi\nabla\cdot(u\nabla v)-\xi\nabla\cdot(u\nabla w)+\mu u(1-u-w),
&(x,t)\in \Omega\times (0,T),\\
\tau v_t=\Delta v-v+u, &(x,t)\in\Omega\times (0,T),\\
w_t=-vw+\eta w(1-u-w),&(x,t)\in \Omega\times (0,T),
\end{array}
\right.
\ee
where $u$, $v$ and $w$ denote the density of cells, the concentration of MDE and the density of ECM, respectively,
where the parameters $\xi$, $\chi$, $\mu$, $\eta$ are positive constants and $\tau \ge 0$,
and where $\Om\subset\mathbb{R}^N$, $N\ge 1$, denotes the physical domain under consideration.\\
Assuming $w\equiv 0$, (\ref{1.0}) is reduced to the classical Keller-Segel system with logistic source,
which has extensively been studied during the past 20 years. Compared with the pure chemotaxis system mentioned above,
one may expect the logistic source and, especially, death terms to enhance the possibility of bounded solutions.
In fact, Tello and Winkler \cite{tello_win_CPDE} proved that if $\tau=0$ and
\bea{mu}
\mu>\frac{(N-2)_+}{N}\chi,
\eea
then for any regular initial data, the system admits a unique global classical solution which is bounded.
In the case $\tau=1$, it is known that bounded solutions exist in lower dimensions ($N=1,2$) for any $\mu>0$
\cite{OTYM},
and that the same result holds for $\mu>\mu_0$ with some $\mu_0(\chi)>0$ in higher dimensions \cite{W1}.
More precisely, a careful inspection of the proofs therein shows that in fact
large values of the ratio $\frac{\mu}{\chi^2}$ are sufficient to exclude blow up in either finite time or infinite time.\\
Concerning (\ref{1.0}) with possibly nontrivial $w$,
the strong coupling between remodeling and chemotaxis substantially complicates the situation, and accordingly
the knowledge on this topic is quite incomplete so far.
To the best of our knowledge, global existence of weak solution is obtained in \cite{stinner_surulescu_win} for $N\le 3$, where (\ref{1.0}) is included as a subsystem. And global solvability of classical solutions in this full system is known only when $\tau=0$ and $N=2$
\cite{tao_win_jde}.
Disregarding the chemotaxis effect, the haptotaxis-only version with $\chi=0$, $\tau=1$ was studied in
\cite{tao_nonanal}.\\
In the real situations, the ECM degrades much faster than it renews, thus the remodeling effect can be neglected,
that is, we may assume $\eta=0$.
Under this hypothesis, the corresponding parabolic-elliptic simplification $\tau=0$
has been studied by Tao and Winkler in \cite{Tao_Win_non}, where it has been proved that solutions stay bounded under
the same condition as in the case $w\equiv 0$, that is, when (\ref{mu}) holds. This shows that in this situation
the haptotaxis term does not affect the boundedness of solution,
and that accordingly the chemotaxis process essentially dominates the whole system. A natural question is whether
a similar conclusion holds in the fully parabolic system obtained on letting $\tau=1$.
In \cite{Tao_2D}, Tao gives a partially positive answer in this direction by
proving that when $N=2$, solutions remain bounded for any $\mu>0$, which thus parallels known results both for $\tau=0$,
and also for $\tau=1$ when $w\equiv 0$.
As far as we can tell, however, despite a result on global existence
established in \cite{tao_wang_non}, the question of boundedness of solutions is completely open in higher dimensions.
It is the purpose of this work to furthermore establish a corresponding parallel result for the three-dimensional
parabolic-parabolic-ODE chemotaxis-haptotaxis model in this direction.\\
Accordingly, we deal with the system
\be{2.0}
\left\{
\begin{array}{llc}
u_t=\Delta u-\chi\nabla\cdot(u\nabla v)-\xi\nabla\cdot(u\nabla w)+\mu u(1-u-w),
&(x,t)\in \Omega\times (0,T),\\[6pt]
v_t=\Delta v-v+u, &(x,t)\in\Omega\times (0,T),\\[6pt]
w_t=-vw,&(x,t)\in \Omega\times (0,T),\\[6pt]
\frac{\partial u}{\partial\nu}-\chi u\frac{\partial v}{\partial\nu}-\xi u\frac{\partial w}{\partial\nu}
=\frac{\partial v}{\partial \nu}=0,&(x,t)\in \partial\Omega\times (0,T),\\[6pt]
u(x,0)=u_0(x),\;\;v(x,0)=v_0(x),\;\;w(x,0)=w_0(x), &x\in \Omega,
\end{array}
\right.
\ee
where $\Om\subset\mathbb{R}^3$ is bounded with smooth boundary and $\chi,\xi,\mu>0$.
We assume that initial data are regular enough and satisfy a standard compatibility condition in the sense that
\beaa
\left\{
\begin{array}{llc}\label{inidata}
&u_0\in C^0(\bar{\Om}),\;\;v_0\in W^{1,\infty}(\Om), \;\;w_0\in C^{2,\alpha}\;(\bar{\Om}) (\alpha\in(0,1)),\\[6pt]
&\frac{\partial w_0}{\partial\nu}=0.
\end{array}
\right.
\eea
Then our main result says the following.
\begin{thm}\label{th1}
  There exists $\theta_0>0$ such that whenever $\chi>0$, $\mu>0$ and $\xi>0$ are such that $\frac{\chi}{\mu}<\theta_0$,
  for any initial data $(u_0,v_0,w_0)$ fulfilling
  \eqref{inidata}, there exists a unique classical solution $(u,v,w)$,
  which is global in time and bounded in $\Om\times(0,\infty)$.
\end{thm}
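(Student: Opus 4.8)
The plan is to follow the by-now standard strategy for taxis systems with logistic damping: establish local existence together with an extensibility criterion, and then derive a chain of a priori estimates --- $L^1$, then $L^p$ for increasingly large $p$, then $L^\infty$ --- all uniform in time, with the smallness of $\chi/\mu$ entering at exactly one place. The result should thus parallel Winkler's treatment of the three-dimensional logistic Keller--Segel system and Tao--Winkler's parabolic--elliptic chemotaxis--haptotaxis model.

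First I would invoke a standard fixed-point and parabolic-regularity argument to obtain a unique maximal classical solution $(u,v,w)$ on $\Omega\times(0,T_{max})$, together with the extensibility criterion that $T_{max}<\infty$ forces $\|u(\cdot,t)\|_{L^\infty(\Omega)}\to\infty$ as $t\nearrow T_{max}$. Nonnegativity of $u,v,w$ follows from the maximum principle, and since $w_t=-vw\le 0$ one obtains $0\le w\le\|w_0\|_{L^\infty(\Omega)}$, together with the global space--time bound $\int_\Omega\int_0^{T_{max}}vw\,dt\,dx\le\int_\Omega w_0$. Integrating the first equation and exploiting the logistic term yields $\|u(\cdot,t)\|_{L^1(\Omega)}\le C$ and $\int_t^{t+1}\int_\Omega u^2\le C$ for all admissible $t$, after which smoothing estimates for the inhomogeneous heat semigroup provide a first bound on $v$.

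The decisive step is to upgrade this to $\|u(\cdot,t)\|_{L^p(\Omega)}\le C$ for some $p>3$, so that $u$ eventually drives $\nabla v$ into $L^\infty$. I would test the first equation with $u^{p-1}$; the chemotactic term, after integration by parts and using $\Delta v=v_t+v-u$, produces the competing pair $\tfrac{\chi(p-1)}{p}\int_\Omega u^{p+1}$ against $-\mu\int_\Omega u^{p+1}$ arising from the logistic absorption. To close the estimate one couples this with an auxiliary differential inequality for a norm of $v$ (for instance $\int_\Omega|\nabla v|^{2q}$, obtained by testing the second equation appropriately), forming a functional whose time derivative absorbs into itself plus a constant; this is precisely where the hypothesis $\chi/\mu<\theta_0$ is invoked, $\theta_0$ depending only on $\Omega$. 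The haptotactic contribution $\xi(p-1)\int_\Omega u^{p-1}\nabla u\cdot\nabla w$ is handled separately: since $w$ solves an ODE one has the pointwise representation $\nabla w(\cdot,t)=e^{-\int_0^t v}\nabla w_0-w_0\,e^{-\int_0^t v}\int_0^t\nabla v$, equivalently the substitution $z:=u\,e^{-\xi w}$ removes the divergence-form haptotaxis flux at the cost of lower-order terms involving $\nabla w$, $vw$ and $w$. Using $w\le\|w_0\|_{L^\infty}$ and the time-integrability $\int_0^{\infty}vw\,dt\le\|w_0\|_{L^\infty}$, these terms turn out to be subordinate and can be absorbed by the diffusion and, crucially, by the strong $-\mu\int_\Omega u^{p+1}$ damping, no matter how large $\xi$ is --- which is why only $\chi/\mu$, and not $\xi/\mu$, needs to be restricted.

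Once $u\in L^\infty\big((0,T_{max});L^p(\Omega)\big)$ with $p>3$, parabolic regularity for the second equation gives $\nabla v\in L^\infty(\Omega\times(0,T_{max}))$, the pointwise formula above then yields $\nabla w\in L^\infty(\Omega\times(0,T_{max}))$, and a Moser-type iteration (or a Duhamel estimate) applied to the first equation produces $\|u(\cdot,t)\|_{L^\infty(\Omega)}\le C$ uniformly in $t$; by the extensibility criterion $T_{max}=\infty$, and boundedness of $v$ and of $w$ follows. I expect the genuine obstacle to be the coupled $L^p$-estimate: controlling the $u$--$v$ coupling (the term $\int_\Omega u^p v_t$, which in the parabolic--parabolic setting cannot simply be eliminated as in the parabolic--elliptic case) simultaneously with the haptotactic terms, uniformly in time, while extracting the explicit smallness threshold for $\chi/\mu$.
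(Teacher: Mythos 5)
Your skeleton (local existence with the $L^\infty$ extensibility criterion, $L^1$ bounds from the logistic term, a bootstrap to $L^p$ with $p>3$, then $\nabla v\in L^\infty(\Omega)$ and a Moser iteration) matches the paper's architecture, and you correctly isolate the genuine obstacle: after integration by parts the chemotactic term becomes $-\frac{p-1}{p}\chi\int_\Omega u^p\Delta v$, and in the fully parabolic case one cannot simply replace $\Delta v$ by $u-v-v_t$ and discard $v_t$. But you leave exactly this step unresolved, and the substitute you sketch would not prove the theorem as stated. Coupling $\frac{d}{dt}\int_\Omega u^p$ with an auxiliary inequality for $\int_\Omega|\nabla v|^{2q}$ is the strategy of \cite{W1}; there the cross term $\chi(p-1)\int_\Omega u^{p-1}\nabla u\cdot\nabla v$ must be absorbed by the diffusion term $\int_\Omega u^{p-2}|\nabla u|^2$, which produces $\chi^2\int_\Omega u^p|\nabla v|^2$ and hence a smallness condition on a quantity like $\chi^2/\mu$ rather than $\chi/\mu$. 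Since $\chi/\mu<\theta_0$ does not imply $\chi^2/\mu$ small (take $\chi$ and $\mu$ both large with fixed ratio), that route does not deliver the linear threshold claimed in Theorem \ref{th1} --- indeed the introduction explicitly contrasts the present result with the $\mu/\chi^2$-type condition obtainable from \cite{W1}.

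The decisive ingredient, absent from your proposal, is the maximal Sobolev regularity estimate for the second equation with an exponential weight in time (Lemma \ref{maximal}): roughly,
$\int_{s_0}^t\int_\Omega e^{\frac{p+1}{2}s}|\Delta v|^{p+1}\le C_{p+1}\int_{s_0}^t\int_\Omega e^{\frac{p+1}{2}s}u^{p+1}+C_{p+1}\|v(\cdot,s_0)\|_{W^{2,p+1}(\Omega)}^{p+1}$.
One keeps $\chi\int_\Omega u^p|\Delta v|$, estimates it by Young's inequality as $\frac{\mu}{8}\int_\Omega u^{p+1}+k_p\chi^{p+1}\mu^{-p}\int_\Omega|\Delta v|^{p+1}$, integrates the resulting differential inequality against $e^{-\frac{p+1}{2}(t-s)}$, and then absorbs the accumulated $\Delta v$ contribution into the time-integrated logistic damping $-\frac{\mu}{2}p\int_{s_0}^t e^{\frac{p+1}{2}s}\int_\Omega u^{p+1}$ precisely when $k_pC_{p+1}(\chi/\mu)^{p+1}\le\frac12$; this is where, and only where, the linear ratio enters. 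The bootstrap is then $u\in L^1\Rightarrow v\in L^{p_1+1}\Rightarrow u\in L^{p_1}$ with $p_1\in(\frac32,2)$, hence $v\in L^\infty\Rightarrow u\in L^{p_2}$ with $p_2>3$, each step requiring the $L^{p+1}$ bound on $v$ also to control the haptotactic by-product $\int_\Omega u^pv$ arising from the pointwise lower bound on $\Delta w$. Your treatment of the haptotaxis term via the explicit representation of $w$ is a workable alternative to the paper's Lemmas \ref{delta w}--\ref{lem2.3}, but without the maximal-regularity step the core $L^p$ estimate does not close under the hypothesis $\chi/\mu<\theta_0$.
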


\begin{remark}
We only carry out the proofs for three-dimensional case in this paper. Actually, we can start the iteration from the higher regularity of $v\in L^{\frac{2N}{N-2}}(\Om)$, the same result still holds for $N\le 15$.
\end{remark}

We see that although our hypothesis on the parameters is not as explicit as (\ref{mu}) obtained for the
parabolic-elliptic counterpart,
it still shows that again boundedness of solutions is enforced by a condition merely referring to the interplay between
chemotaxis and quadratic degradation in logistic source.\\
Apart from this, we find it worth mentioning that our approach even shows a new result for the pure fully parabolic
chemotaxis system with
logistic source in the sense that when $w\equiv 0$, $N\ge 3$, the system admits a classical bounded
solution if $\frac{\mu}{\chi}$ is sufficiently large.
Compared with a similar conclusion under the alternative assumption that $\frac{\mu}{\chi^2}$ be large \cite{W1},
our result seems more consistent
with (\ref{mu}) for the parabolic-elliptic system where the linear ratio $\frac{\mu}{\chi}$
is found to determine the boundedness of solution.
\section{Preliminaries}
We first state a result on local existence of classical solutions.
Without essential difficulties, the proof can be derived based on that in \cite[Lemma 2.1]{tao_win_pe}.
\begin{lem}\label{loc}
  Let $N=3$, $\chi>0$, $\xi>0$ and $\mu>0$. For $(u_0,v_0,w_0)$ satisfying (\ref{inidata}),
  there is $T_{\max}\in(0,\infty]$ such that (\ref{2.0}) admits a unique classical solution
  \begin{align}\nn
	&u\in C^0(\bar{\Om}\times[0,T_{\max}))\cap C^{2,1}(\bar{\Om}\times(0,T_{\max})),\\\nn
	&v\in C^0(\bar{\Om}\times[0,T_{\max}))\cap C^{2,1}(\bar{\Om}\times(0,T_{\max}))\cap
    L_{loc}^\infty([0,T_{\max});W^{1,q}(\Om))(q>3),\\\nn
	&w\in C^{2,1}(\bar{\Om}\times[0,T_{\max})),
  \end{align}
  such that
  \begin{align}
  u\ge 0,\quad v\ge 0 \quad \text{ and } \quad 0<w\le \|w_0\|_{L^\infty(\Om)} \quad\text{ for all }\quad t\in[0,T_{\max}).
  \end{align}
  Moreover, if $T_{\max}<\infty$, then
  \begin{align}
	\mathop{\limsup}\limits_{{t\nearrow T_{\rm max}}}
	(\|u(\cdot,t)\|_{L^{\infty}(\Omega)})=\infty.
  \end{align}
\end{lem}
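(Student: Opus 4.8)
The plan is to build the solution through a contraction-mapping argument in a parabolic function space on a short time interval, in the spirit of \cite[Lemma 2.1]{tao_win_pe}. The starting observation is that the third equation is, for each fixed $x\in\Om$, an ordinary differential equation in $t$, so it integrates explicitly to
\be{w-rep}
w(x,t)=w_0(x)\,\exp\Big(-\int_0^t v(x,s)\,ds\Big).
\ee
This does two things at once. First, as soon as $v\ge0$ is known it yields $0<w\le\|w_0\|_{L^\infty(\Om)}$ directly, so the pointwise bounds on $w$ require no separate argument. Second, it lets us treat $w$ as an explicit (time-nonlocal) functional of $v$, reducing the genuinely coupled unknowns to the pair $(u,v)$.

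Fixing some $q>3$, I would then look for $u$ as a fixed point of the map $\Phi$ that sends a given $u$ in the closed ball $\big\{u\in C^0(\bar\Om\times[0,T]):\ u(\cdot,0)=u_0,\ \|u\|_{L^\infty}\le R\big\}$ to $\Phi(u):=\tilde u$, where $v=v[u]$ solves the linear second equation via $v(t)=e^{t(\Delta-1)}v_0+\int_0^t e^{(t-s)(\Delta-1)}u(s)\,ds$, where $w=w[v]$ is given by \eqref{w-rep}, and where $\tilde u$ is defined through the Duhamel formula for the first equation with these data inserted into the drift and reaction terms. The only delicate contribution is the taxis part, and the crucial tool is the smoothing estimate for the Neumann heat semigroup,
\be{smooth}
\big\|e^{\sigma\Delta}\nabla\cd f\big\|_{L^\infty(\Om)}\le C\,\sigma^{-\frac12-\frac{N}{2q}}\,\|f\|_{L^q(\Om)}.
\ee
With $N=3$ and $f=u\nabla v$ or $f=u\nabla w$, the exponent equals $-\tfrac12-\tfrac{3}{2q}$, which exceeds $-1$ precisely because $q>3$; hence the time singularity in the Duhamel integral is integrable and produces a factor $T^{\frac12-\frac{3}{2q}}\to0$ as $T\to0$. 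Together with the parabolic bound $\|\nabla v\|_{L^q}\le C(\|v_0\|_{W^{1,q}}+\|u\|_{L^\infty})$ and, by differentiating \eqref{w-rep}, $\|\nabla w\|_{L^q}\le C(\|\nabla w_0\|_{L^q}+\|w_0\|_{L^\infty}\int_0^t\|\nabla v\|_{L^q}\,ds)$, this smallness lets $\Phi$ map the ball into itself and act as a contraction once $T$ is small, which yields a unique local mild solution.

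It then remains to upgrade regularity and record the qualitative properties. Standard parabolic $L^p$- and Schauder theory applied to the now-decoupled equations promotes the mild solution to the asserted classes $u,v\in C^{2,1}(\bar\Om\times(0,T_{\max}))$; the claimed $v\in L^\infty_{loc}([0,T_{\max});W^{1,q}(\Om))$ is already built into the construction, and feeding the resulting regularity of $v$ back into \eqref{w-rep} (using $w_0\in C^{2,\alpha}$ and $\pa_\nu w_0=0$, the latter of which also preserves $\pa_\nu w=0$) gives $w\in C^{2,1}(\bar\Om\times[0,T_{\max}))$. Nonnegativity of $u$ and $v$ follows from the parabolic maximum principle: since the reaction term vanishes at $u=0$, the identically-zero function is a subsolution of the $u$-equation, and the nonnegative source $u$ then forces $v\ge0$, after which \eqref{w-rep} gives $w>0$. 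Finally, for the extensibility criterion one argues by contraposition: if $T_{\max}<\infty$ but $\|u(\cdot,t)\|_{L^\infty(\Om)}$ stayed bounded as $t\nearrow T_{\max}$, the same semigroup estimates would propagate this into uniform control of $\|\nabla v\|_{L^q}$, $\|\nabla w\|_{L^q}$ and the remaining norms, so that the local existence step could be reapplied with a time step bounded below, extending the solution past $T_{\max}$ and contradicting maximality.

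The main obstacle throughout is the haptotaxis coupling. Because $w$ obeys an ODE rather than a diffusion equation, it gains no smoothing of its own, and its spatial derivatives inherit only the regularity of $\int_0^t v\,ds$ and $\int_0^t\nabla v\,ds$; in particular $\nabla\cd(u\nabla w)$ ultimately involves $\Delta w$, which depends quadratically on $\int_0^t\nabla v$. Carefully transferring the regularity of $v$ — itself driven by $u$ — through the exponential in \eqref{w-rep} so as to simultaneously close the contraction estimates for $u$ and to deliver the full $C^{2,1}$ regularity of $w$ is the technically demanding part, whereas the restriction $q>3$ is exactly what renders the taxis term in \eqref{smooth} admissible in three dimensions.
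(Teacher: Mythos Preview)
Your proposal is correct and follows essentially the same approach as the paper, which does not give a detailed argument but simply refers to \cite[Lemma~2.1]{tao_win_pe}; the fixed-point construction via the heat-semigroup Duhamel formulation, the explicit ODE representation \eqref{w-rep} for $w$, the use of the smoothing estimate with $q>3$ to make the taxis singularity integrable, and the contraposition for the extensibility criterion are precisely the ingredients of that reference. Your sketch in fact spells out more detail than the paper itself provides.
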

According to the above existence theory, we know that if we fix any $s_0\in(0,T_{\max})$,
then there exists  $M>0$ such that
\begin{align}\label{s0}
	\|u(\cdot,s_0)\|_{L^{\infty}(\Om)}
	+\|v(\cdot,s_0)\|_{L^{\infty}(\Om)}
	+\|w(\cdot,s_0)\|_{W^{2,\infty}(\Om)}<M.
\end{align}
Observing that $w$ can be represented by $v$ and $w(x,s_0)$, we can compute $\Delta w$ in a convenient way.
Upon a slight adaptation of \cite[Lemma 2.2]{Tao_Win_non}, we can prove a one-sided pointwise estimate for $\Delta w$
as follows.
\begin{lem}\label{delta w}
  Let $(u_0,v_0,w_0)$ satisfy \eqref{inidata}, $(u,v,w)$ solve \eqref{2.0}. For all $s_0\ge0$, we have
  \beaa\nn
	\Delta w(x,t)&\ge&\Delta w(x,s_0)\cdot e^{-\int_{s_0}^tv(x,s)ds}
	-2e^{-\int_{s_0}^t v(x,s)ds}\nabla w(x,s_0)\cdot\int_{s_0}^t\nabla v(x,s)ds\\\label{2.2.0}
	&&-\frac{1}{e}w(x,s_0)-w(x,s_0)v(x,t)e^{-\int_{s_0}^t v(x,s)ds}
  \eea
for all $x\in\Om$ and all $t\in(s_0,T_{\max})$.
\end{lem}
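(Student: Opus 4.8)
The plan is to derive a differential inequality for $\Delta w(x,t)$ directly from the third equation $w_t = -vw$, treating $x$ as a parameter so that this is an ODE in $t$. Differentiating $w_t = -vw$ twice in space yields
\[
(\Delta w)_t = -v\,\Delta w - 2\nabla v\cdot\nabla w - w\,\Delta v,
\]
where $\Delta v$ is the only "bad" term since its sign is not controlled. The key observation — borrowed from \cite[Lemma 2.2]{Tao_Win_non} — is to substitute $\Delta v = v_t + v - u$ from the second equation of \eqref{2.0}, and then to discard $-wu \le 0$ (using $w\ge 0$, $u\ge 0$ from Lemma \ref{loc}) to obtain the one-sided inequality
\[
(\Delta w)_t \ge -v\,\Delta w - 2\nabla v\cdot\nabla w - w\,(v_t + v).
\]

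Next I would also compute $\nabla w$: from $w_t=-vw$ one gets $(\nabla w)_t = -v\nabla w - w\nabla v$, whose solution is $\nabla w(x,t) = e^{-\int_{s_0}^t v}\nabla w(x,s_0) - \int_{s_0}^t e^{-\int_s^t v}\,w(x,s)\nabla v(x,s)\,ds$. To keep things clean one uses the crude bound coming from $w_t\le 0$, namely $w(x,s)\le w(x,s_0)$ for $s\ge s_0$; alternatively, a short argument shows $e^{-\int_s^t v}w(x,s) = w(x,t)\le w(x,s_0)$, which lets one write the relevant pieces in terms of $\int_{s_0}^t\nabla v(x,s)\,ds$ as in the statement, at the cost of a factor that I will absorb. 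Then I treat the displayed differential inequality for $\Delta w$ as a linear ODE: multiplying by the integrating factor $e^{\int_{s_0}^t v(x,s)\,ds}$ and integrating from $s_0$ to $t$ gives
\[
\Delta w(x,t) \ge \Delta w(x,s_0)\,e^{-\int_{s_0}^t v} \;-\; 2e^{-\int_{s_0}^t v}\!\int_{s_0}^t e^{\int_{s_0}^s v}\big(\nabla v(x,s)\cdot\nabla w(x,s)\big)\,ds \;-\; e^{-\int_{s_0}^t v}\!\int_{s_0}^t e^{\int_{s_0}^s v}\,w(x,s)\big(v_t(x,s)+v(x,s)\big)\,ds.
\]

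It remains to massage the last two integrals into the form claimed. For the gradient term, insert $\nabla w(x,s) = e^{-\int_{s_0}^s v}\nabla w(x,s_0) + (\text{lower-order})$, so that $e^{\int_{s_0}^s v}\nabla v(x,s)\cdot\nabla w(x,s)$ collapses to $\nabla w(x,s_0)\cdot\nabla v(x,s)$ plus harmless terms, and integrating in $s$ produces $\nabla w(x,s_0)\cdot\int_{s_0}^t\nabla v$. For the term containing $v_t+v$, I integrate by parts in $s$: since $\partial_s\big(e^{\int_{s_0}^s v}\big) = v(x,s)e^{\int_{s_0}^s v}$, the expression $e^{\int_{s_0}^s v}(v_t+v)\,w$ can be rearranged, and using again $e^{-\int_{s_0}^s v}w(x,s)=w(x,t)\cdot(\ldots)\le w(x,s_0)$ together with the elementary bound $\sup_{r\ge 0} r e^{-r} = 1/e$ (applied to $r=\int_{s_0}^t v$, or to the relevant partial integral) yields precisely the two remaining terms $-\tfrac1e w(x,s_0)$ and $-w(x,s_0)v(x,t)e^{-\int_{s_0}^t v}$. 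The main obstacle, and the only place real care is needed, is exactly this bookkeeping in the integration by parts — tracking which exponential factors pair with $v_t$ versus $v$ and verifying that the leftover quantities are dominated, after discarding favorably-signed contributions, by the clean expressions in \eqref{2.2.0}; everything else is routine once the substitution $\Delta v = v_t+v-u$ and the sign of $wu$ are exploited.
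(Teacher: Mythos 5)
Your proposal is correct and is essentially the paper's argument run in reverse: the paper writes $w(x,t)=e^{-\int_{s_0}^t v(x,s)\,ds}\,w(x,s_0)$ and takes the Laplacian of that formula directly, whereas you integrate the ODE for $\Delta w$ against the integrating factor $e^{\int_{s_0}^t v}$ --- the same four terms emerge, and the same three observations (drop $wu\ge 0$ after substituting $\Delta v=v_t+v-u$, drop the perfect-derivative term $w(x,s_0)\,|\int_{s_0}^t\nabla v|^2\ge 0$, and apply $ze^{-z}\le 1/e$ to $z=\int_{s_0}^t v$) close the estimate. The only remark worth adding is that the ``main obstacle'' you flag is actually vacuous: since $e^{\int_{s_0}^s v(x,\sigma)\,d\sigma}\,w(x,s)\equiv w(x,s_0)$ is constant in $s$, the integral of $e^{\int_{s_0}^s v}\,w\,(v_t+v)$ evaluates exactly to $w(x,s_0)\bigl(v(x,t)-v(x,s_0)+\int_{s_0}^t v\bigr)$ with no integration by parts or bookkeeping required.
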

\begin{proof}
  Representing $w(x,t)$ according to
  \bea{2.1}
	w=e^{-\int_{s_0}^t v(x,s)ds}w(x,s_0)
  \eea
  for all $x\in\Omega$ and $t\in(s_0,T_{\max})$,
  we directly compute
  \beaa\nn
	\Delta w(x,t)
	&=&
	\Delta w(x,s_0)e^{-\int_{s_0}^tv(x,s)ds}-2e^{-\int_{s_0}^t v(x,s)ds}\nabla w(x,s_0)\cdot\int_{s_0}^t\nabla v(x,s)ds\\\nn
	&&+w(x,s_0)e^{-\int_{s_0}^t v(x,s)ds}|\int_{s_0}^t\nabla v(x,s)ds|^2-w(x,s_0)e^{-\int_{s_0}^t v(x,s)ds}
	\int_{s_0}^t\Delta v(x,s)ds.
  \eea
  Since $ze^{-z}\le\frac{1}{e}$, by dropping some nonnegative terms, we obtain that
  \beaa\nn
	\Delta w(x,t)&\ge&
	\Delta w(x,s_0)e^{-\int_{s_0}^tv(x,s)ds}-2e^{-\int_{s_0}^t v(x,s)ds}\nabla w(x,s_0)
	\cdot\int_{s_0}^t\nabla v(x,s)ds\\\nn
	&&~~~~~~~~-w(x,s_0)e^{-\int_{s_0}^tv(x,s)ds}\int_{s_0}^t(v_s(x,s)+v(x,s)-u(x,s))\\\nn
	&\ge&
	\Delta w(x,s_0)e^{-\int_{s_0}^tv(x,s)ds}-2e^{-\int_{s_0}^t v(x,s)ds}\nabla w(x,s_0)
	\cdot\int_{s_0}^t\nabla v(x,s)ds\\\nn
	&&~~~~~~~~-w(x,s_0)e^{-\int_{s_0}^t v(x,s)ds}(v(x,t)-v(x,s_0))-w(x,s_0)
	e^{-\int_{s_0}^t v(x,s)ds}\int_0^tv(x,s)ds\\\nn
	&\ge&
	\Delta w(x,s_0)e^{-\int_{s_0}^tv(x,s)ds}-2e^{-\int_{s_0}^t v(x,s)ds}\nabla w(x,s_0)\cdot\int_0^t\nabla v(x,s)ds\\\nn
	&&~~~~~~~~-w(x,s_0)v(x,t)e^{-\int_{s_0}^t v(x,s)ds}-\frac{1}{e}w(x,s_0)
  \eea
 for all $t\in(s_0,T_{\max})$. Thus the proof is complete.
\end{proof}
With the aid of Lemma \ref{delta w}, we can furthermore prepare a preliminary estimate of an integral
related to the haptotactic interaction.
This estimate will be used in different ways later on.
\begin{lem}\label{lem2.3}
  Let $\chi>0$, $\xi>0$, and assume that \eqref{inidata} holds. Then for any $p>1$, $s_0\in(0,T_{\max})$, the solution
  of (\ref{2.0}) satisfies
  \bea{2.3}
	(p-1)\xi\intO u^{p-1}\nabla u\cdot\nabla w\le
	(3M\xi+\frac{1}{e}M\xi)\intO u^p+M\xi\intO u^pv+2M(p-1)\xi\intO u^{p-1}|\nabla u|
  \eea
  for all $t\in(s_0,T_{\max})$.
\end{lem}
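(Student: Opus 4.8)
The plan is to integrate by parts in the haptotactic integral so as to transfer the derivative off $\nabla w$ onto the terms we can control, and then plug in the pointwise lower bound for $\Delta w$ from Lemma~\ref{delta w}. Concretely, write
\[
(p-1)\xi\intO u^{p-1}\nabla u\cdot\nabla w=\xi\intO \nabla(u^p)\cdot\nabla w\cdot\frac{p-1}{p},
\]
wait — more cleanly, note $(p-1)u^{p-1}\nabla u=\tfrac{p-1}{p}\nabla(u^p)$, so integration by parts against $\nabla w$ produces $-\tfrac{p-1}{p}\intO u^p\Delta w$ plus a boundary term. Since $\frac{\partial w}{\partial\nu}=0$ on $\partial\Omega$ (which propagates from the assumption $\frac{\partial w_0}{\partial\nu}=0$ through the representation $w=e^{-\int v}w(\cdot,s_0)$ once one also knows $\frac{\partial v}{\partial\nu}=0$), the boundary term vanishes. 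Thus the left-hand side equals $-\tfrac{p-1}{p}\intO u^p\Delta w$, and it remains to bound $-\Delta w$ from above, i.e.\ bound $\Delta w$ from below, which is exactly what Lemma~\ref{delta w} supplies.

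Next I would substitute the four-term lower bound for $\Delta w(x,t)$ from \eqref{2.2.0} and estimate each resulting contribution to $-\tfrac{p-1}{p}\intO u^p\Delta w$. The exponential factors $e^{-\int_{s_0}^t v}$ are bounded by $1$ since $v\ge 0$. The term $-\Delta w(x,s_0)e^{-\int v}$ contributes at most $\|\Delta w(\cdot,s_0)\|_{L^\infty}\intO u^p\le M\intO u^p$ by \eqref{s0} (here one absorbs the $\tfrac{p-1}{p}\le 1$ factor). The term $-\tfrac1e w(x,s_0)$ contributes at most $\tfrac1e M\intO u^p$. The term $-w(x,s_0)v(x,t)e^{-\int v}$ contributes at most $M\intO u^p v$. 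The genuinely non-routine term is the gradient one, $2e^{-\int v}\nabla w(x,s_0)\cdot\int_{s_0}^t\nabla v(x,s)\,ds$; here I would use $|\nabla w(\cdot,s_0)|\le M$ and observe that after the full integration-by-parts identity this term enters as $\tfrac{p-1}{p}\intO u^p\cdot 2e^{-\int v}\nabla w(\cdot,s_0)\cdot\int_{s_0}^t\nabla v\,ds$, which one does \emph{not} want to leave with a time integral of $\nabla v$. Instead, the trick (following \cite[Lemma 2.2]{Tao_Win_non}) is to integrate by parts \emph{once more} in space, moving a derivative off $\int_{s_0}^t\nabla v\,ds$: since $\int_{s_0}^t\nabla v\,ds=\nabla\int_{s_0}^t v\,ds$, one writes this contribution as an integral of $\nabla(u^p)$ (or of $u^{p-1}\nabla u$) against $w(\cdot,s_0)\int_{s_0}^t\nabla v\,ds$ times exponential factors, and then the exponential $e^{-\int v}$ times $\int_{s_0}^t v\,ds$ is again bounded by $1/e$, while the stray $\int_{s_0}^t\nabla v$ gets cancelled. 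Tracking the constants, the gradient term is controlled by $2M\intO u^p+2M(p-1)\xi\intO u^{p-1}|\nabla u|$ — actually the cleanest route is: keep the term as is but bound $|\int_{s_0}^t\nabla v\,ds|$ only \emph{after} recognizing it as $|\nabla\int_{s_0}^t v|$ and integrating by parts, producing precisely the $\intO u^{p-1}|\nabla u|$ term on the right of \eqref{2.3}.

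The bookkeeping of constants is what yields exactly the coefficients $3M\xi+\tfrac1e M\xi$ on $\intO u^p$, $M\xi$ on $\intO u^p v$, and $2M(p-1)\xi$ on $\intO u^{p-1}|\nabla u|$ stated in \eqref{2.3}: the ``$3$'' collects one unit from the $\|\Delta w(\cdot,s_0)\|_{L^\infty}$ term and two units from the reorganized gradient term (one from each of the two pieces produced when integrating by parts the factor $w(\cdot,s_0)\,\partial_s\!\int v\,ds$ and using $ze^{-z}\le 1/e$ together with $v=v_s+v-u$ — in practice the factor $2$ from the ``$2\nabla w$'' coefficient is what produces the even number). The main obstacle, and the only place requiring genuine care, is handling the $\int_{s_0}^t\nabla v\,ds$ factor: one must avoid any attempt to bound it in $L^\infty$ or $L^p$ directly (no such bound is available at this stage), and instead exploit that it is a gradient of a time-integral of $v$, so that a second spatial integration by parts converts it into the harmless $\intO u^{p-1}|\nabla u|$ term while the accompanying $e^{-\int v}\int v$ is uniformly bounded by $1/e$. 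Once that term is dealt with, the remaining estimates are immediate from \eqref{s0} and $v\ge 0$.
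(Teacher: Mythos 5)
Your overall skeleton is the paper's: integrate by parts to rewrite the left-hand side as $-\tfrac{p-1}{p}\xi\intO u^p\Delta w$ (the boundary term vanishing since $\partial w/\partial\nu=0$ propagates from $\partial w_0/\partial\nu=0$ and $\partial v/\partial\nu=0$), insert the lower bound of Lemma~\ref{delta w}, and deal with the three easy terms exactly as you describe. The one genuinely delicate point is the gradient term, and there your description would not close as written. You propose to integrate by parts against $\int_{s_0}^t\nabla v\,ds=\nabla\int_{s_0}^t v\,ds$, moving the derivative onto $u^p\,e^{-\int_{s_0}^t v\,ds}\,\nabla w(\cdot,s_0)$, and then to invoke $z e^{-z}\le 1/e$. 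But when that derivative hits the exponential factor it regenerates $e^{-\int_{s_0}^t v\,ds}\int_{s_0}^t\nabla v\,ds$, now multiplied by the additional unbounded factor $\int_{s_0}^t v\,ds$; nothing ``cancels,'' and the bound $e^{-\int v}\int v\le 1/e$ does not remove the surviving $\bigl|\int_{s_0}^t\nabla v\,ds\bigr|$, for which no estimate is available at this stage. So that version of the second integration by parts goes in a circle.

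The correct maneuver, which is what the paper actually does, is to absorb the exponential \emph{into} the gradient before integrating by parts: since
\begin{equation*}
e^{-\int_{s_0}^t v(x,s)\,ds}\int_{s_0}^t\nabla v(x,s)\,ds\;=\;-\nabla\Bigl(e^{-\int_{s_0}^t v(x,s)\,ds}\Bigr),
\end{equation*}
the troublesome contribution equals $-2\tfrac{p-1}{p}\xi\intO u^p\nabla w(\cdot,s_0)\cdot\nabla e^{-\int_{s_0}^t v\,ds}$, and integrating by parts off this \emph{full} gradient produces exactly two pieces, $2\tfrac{p-1}{p}\xi\intO u^p\Delta w(\cdot,s_0)\,e^{-\int v}$ and $2\tfrac{p-1}{p}\xi\intO\nabla(u^p)\cdot\nabla w(\cdot,s_0)\,e^{-\int v}$, with no residue of $\int_{s_0}^t\nabla v\,ds$ and no need for $ze^{-z}\le 1/e$ here. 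Bounding these by $2M\xi\intO u^p$ and $2M(p-1)\xi\intO u^{p-1}|\nabla u|$ (using $e^{-\int v}\le 1$ and \eqref{s0}) then yields precisely the stated coefficients: the ``$3$'' is $1$ from the original $\Delta w(\cdot,s_0)$ term plus $2$ from the first of these two pieces, not from any use of $v=v_s+v-u$, which belongs to the proof of Lemma~\ref{delta w} rather than here. With that correction your argument coincides with the paper's.
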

\begin{proof}
  Integration by parts and an application of Lemma \ref{delta w} yield that
  \beaa\nn
	&&(p-1)\xi\intO u^{p-1}\nabla u\cdot\nabla w\\\nn
	&=&-\frac{p-1}{p}\xi\intO u^p\Delta w\\\nn
	&\le& -\frac{p-1}{p}\xi\intO u^p(\Delta w(x,s_0)e^{-\int_{s_0}^tv(x,s)ds}
	-2e^{-\int_{s_0}^t v(x,s)ds}\nabla w(x,s_0)\cdot\int_{s_0}^t\nabla v(x,s)ds\\\nn
	&&~~~~~-\frac{1}{e}w(x,s_0)-w(x,s_0)v(x,t)e^{-\int_{s_0}^tv(x,s)ds})\\\nn
	&\le& (M\xi+\frac{1}{e}M\xi)\intO u^p+M\xi\intO u^pv
	-2\frac{p-1}{p}\xi\intO u^p\nabla w(x,s_0)\cdot\nabla e^{-\int_{s_0}^t
	v(x,s)ds}\\\nn
	&=&(M\xi+\frac{1}{e}M\xi)\intO u^p+M\xi\intO u^pv+2\frac{p-1}{p}\xi\intO u^p\Delta w(x,s_0)e^{-\int_{s_0}^t
	v(x,s)ds}\\\nn
	&&~~~~~~~~~~~~~~+2\frac{p-1}{p}\xi\intO \nabla u^p\cdot \nabla w(x,s_0)
	e^{-\int_{s_0}^t
	v(x,s)ds}\\\nn
	&\le&
	(3M\xi+\frac{1}{e}M\xi)\intO u^p+M\xi\intO u^pv+2M(p-1)\xi\intO u^{p-1}|\nabla u|
  \eea
  for all $t\in(s_0,T_{\max})$, where in accordance with (\ref{s0}),
  $M$ is an upper bound for $\|w(\cdot,s_0)\|_{W^{2,\infty}(\Om)}$.
\end{proof}

\begin{lem}\label{lem2.4}
  Let $\chi>0$, $\xi>0$ and $\mu>0$, and assume \eqref{inidata}.
  Then there exists $C(\mu,|\Om|)>0$ such that
  \begin{align}\label{2.1.0}
	&\intO u(\cdot,t)<C,\;\;\intO v(\cdot,t)<C
  \end{align}
  for all $t\in(0,T_{\max})$.
\end{lem}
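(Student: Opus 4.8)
The plan is to establish the $L^1$ bounds by testing the respective equations against the constant function $1$ and exploiting the dissipative structure of the logistic source. For the bound on $\intO u$, I would integrate the first equation in (\ref{2.0}) over $\Om$. The divergence terms $\chi\nabla\cdot(u\nabla v)$ and $\xi\nabla\cdot(u\nabla w)$ vanish after integration by parts thanks to the no-flux boundary condition $\frac{\partial u}{\partial\nu}-\chi u\frac{\partial v}{\partial\nu}-\xi u\frac{\partial w}{\partial\nu}=0$, and the Laplacian contributes zero as well. This leaves
\begin{align}\nn
\frac{d}{dt}\intO u=\mu\intO u(1-u-w)\le\mu\intO u-\mu\intO u^2,
\end{align}
where I have used $w\ge 0$ from Lemma \ref{loc} to drop the term $-\mu\intO u^2 w\le 0$ wait that is $-\mu u^2 w$ — more precisely $-\mu uw\cdot u$; in any case the term $-\mu\intO uw$ is nonpositive and is discarded. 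By the Cauchy--Schwarz inequality, $\left(\intO u\right)^2\le|\Om|\intO u^2$, so writing $y(t):=\intO u$ we obtain the differential inequality $y'\le\mu y-\frac{\mu}{|\Om|}y^2$, which is a logistic ODE. A standard comparison argument then yields $y(t)\le\max\{y(0),|\Om|\}$ for all $t\in(0,T_{\max})$, giving the first estimate in (\ref{2.1.0}) with a constant depending only on $\|u_0\|_{L^1(\Om)}$ (hence on the fixed initial data) and $|\Om|$.

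For the bound on $\intO v$, I would integrate the second equation over $\Om$; the Laplacian term vanishes by $\frac{\partial v}{\partial\nu}=0$, leaving
\begin{align}\nn
\frac{d}{dt}\intO v=-\intO v+\intO u\le-\intO v+C_1,
\end{align}
where $C_1$ is the uniform bound on $\intO u$ just obtained. Another elementary ODE comparison (or Gronwall) gives $\intO v(\cdot,t)\le\max\{\intO v_0,C_1\}$, which is the second estimate. Choosing $C$ to be the maximum of the two resulting constants completes the proof.

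There is essentially no serious obstacle here — this is the standard opening move for chemotaxis systems with logistic damping, and the only points requiring a little care are verifying that the boundary terms from the two taxis fluxes cancel correctly against $\frac{\partial u}{\partial\nu}$ in the Neumann condition, and confirming that all discarded terms (namely $-\mu\intO u^2$ is kept, while $-\mu\intO uw$ and, in the $\Delta w$-related manipulations, no such terms appear here) genuinely have the right sign. The nonnegativity of $u$, $v$, $w$ supplied by Lemma \ref{loc} is exactly what guarantees $\intO uw\ge 0$ so that its removal only weakens the inequality in the favorable direction. The constant is independent of $\chi$, $\xi$, and $T_{\max}$, and depends on $\mu$ only through the logistic coefficient, as claimed.
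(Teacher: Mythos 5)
Your proof is correct and follows exactly the route the paper takes: integrate the first equation (the flux terms vanish by the no-flux boundary condition), drop $-\mu\intO uw\le 0$, apply Cauchy--Schwarz to get the logistic ODE for $\intO u$, and then feed the resulting bound into the integrated second equation for $\intO v$. No discrepancies worth noting.
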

\begin{proof}
  The first inequality in (\ref{2.0}) can be proved by simply integrating the first equation in (\ref{2.0}) on $\Om$ and using
  that $(\intO u)^2\le |\Om|(\intO u^2)$ due to the Cauchy-Schwarz inequality.
  The estimate of $\intO v$ can be obtained in a similar way and with the aid of the first inequality.
\end{proof}
As an essential ingredient of the proof of our main result, we will use a Maximal Sobolev regularity property
associated with the second equation in (\ref{2.0}).
The following lemma is not
the original version of a corresponding statement in \cite[Theorem 3.1]{MJ}, but by means of a simple transformation
it is adapted to the current situation by including an exponential weight function.
\begin{lem}\label{maximal}
  Let $r\in (1,\infty)$, $T\in(0,\infty)$. Consider the following evolution equation
  \begin{equation}
	\left\{
	\begin{array}{llc}
	v_t=\Delta v-v+f,&(x,t)\in\Om\times(0,T),\\[6pt]
	\frac{\partial v}{\partial \nu}=0,&(x,t)\in\partial\Om\times(0,T),\\[6pt]
	v(x,0)=v_0(x),&x\in\Om.
	\end{array}
	\right.
  \end{equation}
  For each $v_0\in W^{2,r}(\Omega)$ such that $\frac{\partial v_0}{\partial \nu}=0$ on $\partial \Omega$
  and any $f\in L^r((0,T);L^r(\Omega))$, there exists a unique solution
  $$v\in W^{1,r}((0,T);L^r(\Omega))\cap L^r((0,T);W^{2,r}(\Omega)),$$
  and there exists $C_r>0$ such that
  \bea{est_MJ}
	\int_0^T\intO e^{\frac12rs}|\Delta v(x,s)|^rdxds
	\le C_r\int_0^T\intO e^{\frac12rs}|f(x,s)|^rdxds
	+C_r\|v_0\|_{L^r(\Omega)}^r+C_r\|\Delta v_0\|_{L^r(\Omega)}^r.
  \eea
  Moreover, if for some $s_0\in(0,T)$, $v(\cdot,s_0)$ satisfies $v(\cdot,s_0)\in W^{2,r}(\Omega)$
  with $\frac{\partial v}{\partial \nu}(\cdot,s)=0$ on $\partial\Omega$, then with the same
  constant $C>0$ as above we have
  \bea{est_MJ1}
	\int_{s_0}^T\intO e^{\frac12 rs}|\Delta v(x,s)|^rdxds
	\le C_r\int_{s_0}^T\intO e^{\frac12 rs}|f(x,s)|^rdxds
	+C_r\|v(\cdot,s_0)\|_{L^r(\Omega)}^r+C_r\|\Delta v(\cdot,s_0)\|_{L^r(\Omega)}^r.
  \eea
\end{lem}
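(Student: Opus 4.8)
The plan is to derive the weighted maximal Sobolev estimate for the shifted equation by reducing it to the unweighted maximal regularity result of \cite[Theorem 3.1]{MJ}. First I would introduce the transformation $\tilde v(x,t):=e^{\frac12 t}v(x,t)$. A direct computation shows that if $v$ solves $v_t=\Delta v-v+f$, then $\tilde v$ solves the heat-type equation $\tilde v_t=\Delta\tilde v-\tfrac12\tilde v+e^{\frac12 t}f=:\Delta\tilde v-\tfrac12\tilde v+\tilde f$, still with homogeneous Neumann boundary conditions and initial datum $\tilde v(\cdot,0)=v_0$. The zeroth-order term $-\tfrac12\tilde v$ is harmless for maximal regularity (one may absorb it, or equivalently apply the result to $-\Delta+\tfrac12$, whose maximal $L^r$-regularity on a bounded smooth domain under Neumann conditions is classical); thus \cite[Theorem 3.1]{MJ} furnishes a unique solution $\tilde v\in W^{1,r}((0,T);L^r(\Omega))\cap L^r((0,T);W^{2,r}(\Omega))$ together with the bound
\beaa
\int_0^T\intO|\Delta\tilde v|^r\,dx\,ds\le C_r\int_0^T\intO|\tilde f|^r\,dx\,ds+C_r\|v_0\|_{L^r(\Omega)}^r+C_r\|\Delta v_0\|_{L^r(\Omega)}^r.
\eea
Translating back via $\Delta\tilde v=e^{\frac12 s}\Delta v$ and $\tilde f=e^{\frac12 s}f$, and noting $|\Delta\tilde v|^r=e^{\frac12 rs}|\Delta v|^r$, $|\tilde f|^r=e^{\frac12 rs}|f|^r$, gives exactly \eqref{est_MJ}; uniqueness and the claimed regularity class for $v$ follow since $t\mapsto e^{\frac12 t}$ is smooth and bounded above and below on $[0,T]$.

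For the second assertion \eqref{est_MJ1}, I would simply shift the time origin to $s_0$. Given that $v(\cdot,s_0)\in W^{2,r}(\Omega)$ with $\partial_\nu v(\cdot,s_0)=0$, one applies the first part of the lemma on the interval $(s_0,T)$ to the function $s\mapsto v(\cdot,s_0+s)$ — equivalently, one repeats the transformation argument with weight $e^{\frac12(s-s_0)}$ anchored at $s_0$ — and then multiplies through by the fixed positive constant $e^{\frac12 rs_0}$ to restore the weight $e^{\frac12 rs}$ used throughout the paper. This produces \eqref{est_MJ1} with the same constant $C_r$, since the maximal regularity constant depends only on $\Omega$ and $r$, not on the length of the time interval.

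The only genuinely delicate point is verifying that the lower-order perturbation is admissible, i.e.\ that maximal $L^r$-regularity is preserved when passing from $-\Delta+1$ to $-\Delta+\tfrac12$ (or, after the transformation, when retaining the term $-\tfrac12\tilde v$). Since the paper already relies on the statement of \cite[Theorem 3.1]{MJ} for the operator $-\Delta+1$, and since $-\Delta+\tfrac12$ and $-\Delta+1$ differ by a bounded operator with the same domain, this is standard: either one invokes the perturbation stability of maximal regularity directly, or one keeps the term $-\tfrac12\tilde v$ on the right-hand side, controls $\|\tilde v\|_{L^r((0,T);L^r(\Omega))}$ by $\|\tilde v\|_{L^r((0,T);W^{2,r}(\Omega))}$ times a small factor after further rescaling (or by a Gronwall-type absorption), and concludes. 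Everything else is bookkeeping with the exponential weight, so I expect the write-up to be short.
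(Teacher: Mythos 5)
Your proposal is correct and follows essentially the same route as the paper: the substitution $e^{\frac12 t}v$ converts the problem into one for the operator $\Delta-\tfrac12$ with homogeneous Neumann conditions, whose maximal $L^r$-regularity yields the weighted estimate after undoing the substitution, and \eqref{est_MJ1} is then obtained by a time shift. The only cosmetic difference is that the paper reduces to zero initial data by subtracting a cut-off function times $v_0$ (so as to apply the standard zero-trace formulation of maximal regularity), whereas you apply the result directly with initial datum in $W^{2,r}(\Omega)$; both are standard and lead to the same estimate.
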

\begin{proof}
  Letting $w(x,s)=e^{\frac12 s}v(x,s)-\chi(s)v_0$, where $\chi(s)$ is a cut-off function such that $\chi\in C_0^\infty([0,1))$, and $\chi(0)=1$, we see that $w$ solves
  \begin{equation*}\left\{
	\begin{array}{llc}
	&w_s(x,s)=(\Delta-\frac12) w(x,s)+e^{\frac12s}f(x,s)+\chi(s)\Delta v_0-\frac12\chi(s)v_0-\chi'(s)v_0,&(x,s)\in\Om\times(0,T),\\
	&\frac{\pa w}{\pa\nu}=0,&(x,s)\in\pa\Om\times[0,T),\\
	&w(x,0)=0,&x\in\Om.
	\end{array}
	\right.
  \end{equation*}
Let $A$ be $\Delta-\frac12$ associated with Neumann boundary condition, we know that $A$ is a generator of an analytic semigroup of negative exponential type. An application of standard results on maximal sobolev regularity provides $c_r>0$ such that
  \begin{align}\nn
	&\int_0^T\norm[L^r]{w(\cdot,s)}^r ds
+\int_0^T\norm[L^r]{w_s(\cdot,t)}^r ds
+\int_0^T\norm[L^r(\Om)]{A w(\cdot,s)}^rds\\\nn
&\le c_r\int_0^T\norm[L^r]{e^{\frac12 t}f(\cdot,t)}^r +\norm[L^r]{\chi(s)\Delta v_0-\frac12\chi(s)v_0-\chi'(s)v_0}^r ds\\
&\le c_r\int_0^T\norm[L^r]{e^{\frac12 t}f(\cdot,t)}^r +c_rc_1\norm[W^{2,r}(\Om)]{v_0},\label{msr}
  \end{align}
  where $c_1:=2+\mathop{\sup}\limits_{s\in[0,1]} \chi'(s) $.
  The triangle inequality with \eqref{msr} implies that
\begin{align}\nn
\int_0^T\norm[L^r]{\Delta w(\cdot,s)}^r ds &\le \int_0^T\norm[L^r]{(\Delta-\frac12)w(\cdot,t)}^r ds+\int_0^T \frac12\norm[L^r]{w(\cdot,t)}^r ds\\\nn
&\le c_r \int_0^T\norm[L^r]{e^{\frac12 t}f(\cdot,s)}^r ds+c_1c_r\norm[W^{2,r}]{v_0}^r,
\end{align}
which leads to (\ref{est_MJ}) upon direct computation and letting $C_r:=\max\{c_r,c_1c_r\}$. Thereafter, (\ref{est_MJ1})
  directly follows upon replacing $v(x,t)$ by $v(x,t+s_0)$, where the constant might be different from
  the aforementioned one.
\end{proof}
%
%
%
%
%
%
%
%
%
%
%
%
\section{Boundedness}
In this section, we derive the claimed boundedness result via combining the above result on maximum Sobolev regularity
with a Moser-type iteration.
The former ingredient is first used to estimate $u$ in some appropriate Lebesgue space,
from which a certain suitable estimate of $\nabla v$ will follow. This approach will be carried out twice
to ensure that $\nabla v$ is bounded in $L^\infty(\Om)$. Thereupon we can establish a series
of inequalities based on which a Moser iteration is performed to finally achieve boundedness of $u$ in $L^\infty(\Om)$.
\begin{lem}\label{lem3.0}
  Let $\chi>0$, $\xi>0$, $\mu>0$, $\theta=\frac{\chi}{\mu}$ and let $p_1\in(\frac32,2)$,
   $p_2\in(3,\infty)$. Then
  there exists $\theta_0>0$
  such that whenever $\theta<\theta_0$, for any $(u_0,v_0,w_0)$ fulfilling (\ref{inidata}) and some $s_0\in(0,T_{\max})$,
   we can find $C>0$ such that
  \bea{3.1.0}
	\intO u^{p_2}(\cdot,t)\le C\;\;\text{ for all } t\in(s_0,T_{\max}).
  \eea
\end{lem}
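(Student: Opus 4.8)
The plan is to run a bootstrap argument driven by maximal Sobolev regularity for the $v$-equation, where the crucial gain comes from the quadratic absorption term $-\mu u^2$ in the logistic source. First I would fix $p_1\in(\tfrac32,2)$, multiply the first equation in \eqref{2.0} by $u^{p_1-1}$ and integrate over $\Om$, producing on the left $\frac{1}{p_1}\frac{d}{dt}\intO u^{p_1}$ and $(p_1-1)\intO u^{p_1-2}|\nabla u|^2$, and on the right the chemotaxis term $(p_1-1)\chi\intO u^{p_1-1}\nabla u\cdot\nabla v$, the haptotaxis term handled via Lemma \ref{lem2.3}, and the logistic contribution $\mu\intO u^{p_1}-\mu\intO u^{p_1+1}-\mu\intO u^{p_1}w$. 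The chemotaxis term is rewritten as $-\frac{p_1-1}{p_1}\chi\intO u^{p_1}\Delta v$, and using $v_t=\Delta v-v+u$ one trades $\Delta v$ for $v_t+v-u$; the $-u$ part recombines with the logistic term. The point is that after absorbing the diffusion term via Young's inequality, one is left controlling $\intO u^{p_1}|\Delta v|$ and $\intO u^{p_1}v$, against which the negative term $-\mu\intO u^{p_1+1}$ is available. By Young's inequality, $\chi\intO u^{p_1}|\Delta v|\le \frac{\mu}{4}\intO u^{p_1+1}+C\theta^{p_1}\cdot\mu\intO |\Delta v|^{p_1+1}$ (schematically), so the smallness of $\theta=\chi/\mu$ is exactly what lets us keep the $u^{p_1+1}$ loss under control; the term $\intO u^{p_1}v$ is treated similarly since $p_1<2$ gives room. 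After adding a multiple of $\intO u^{p_1}$ to form an $e^{t}$-type differential inequality, and multiplying by the exponential weight, one integrates in time from $s_0$ and invokes \eqref{est_MJ1} with $r=p_1+1$ and $f=u$: the resulting $\int e^{\frac{r}{2}s}|\Delta v|^r$ on the right is dominated by $C_r\int e^{\frac r2 s}u^r=C_r\int e^{\frac r2 s}u^{p_1+1}$, which is precisely the negative term we generated. Choosing $\theta_0$ small so that the combined constant in front of $\intO u^{p_1+1}$ is negative (or at most cancels), we obtain a bound $\intO u^{p_1}(\cdot,t)\le C$ for all $t\in(s_0,T_{\max})$, i.e.\ a first-stage estimate in $L^{p_1}$.

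The second stage upgrades $L^{p_1}$ to $L^{p_2}$ for the prescribed $p_2\in(3,\infty)$. From $\intO u^{p_1}\le C$ with $p_1>\tfrac32$ and the maximal regularity estimate \eqref{est_MJ1} applied again with $r=p_1$, one gets $\int_{s_0}^t e^{\frac{p_1}{2}s}\intO|\Delta v|^{p_1}\,ds\le Ce^{\frac{p_1}{2}t}$, hence (by semigroup smoothing / elliptic regularity together with the standard $L^p$--$L^q$ heat estimates and the Gagliardo--Nirenberg inequality) a bound on $\nabla v$ in $L^{q_1}(\Om)$ uniformly in $t$ for some $q_1$ determined by $p_1$; since $p_1>\tfrac32$ in $N=3$ this $q_1$ exceeds $3$, so in fact one already obtains $\nabla v\in L^\infty$ after one further iteration, but it is cleaner to carry out the $u^{p_1}\mapsto u^{p_2}$ testing directly. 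Repeating the $u^{p_2-1}$-testing procedure of the first stage, now with the a priori control of $\intO u^{p_1}$ and the improved integrability of $\nabla v$ in hand, and again exploiting $-\mu\intO u^{p_2+1}$ together with the smallness of $\theta$ to absorb both the chemotactic term (via \eqref{est_MJ1} with $r=p_2+1$) and, using Lemma \ref{lem2.3}, the haptotactic contributions, yields the desired $\intO u^{p_2}(\cdot,t)\le C$ on $(s_0,T_{\max})$.

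The main obstacle is the term $M\xi\intO u^{p}v$ appearing in Lemma \ref{lem2.3} and the term $\chi\intO u^p|\Delta v|$ (equivalently $\chi\intO u^p v$ after using the equation): these couple $u$ to $v$ at a dangerously high power, and they cannot be absorbed by the good terms unless one is careful about exponents. The resolution is twofold — first, keep $p_1<2$ (and more generally keep the testing exponent below the threshold where Young's inequality against $u^{p+1}$ and $|\Delta v|^{p+1}$ fails), so that $\intO u^p v\le \eta\intO u^{p+1}+\eta\intO v^{p+1}+C$ with $\int v^{p+1}$ itself controlled by maximal regularity since $\Delta v\in L^{p+1}$ controls $v$ in $L^{p+1}$ via the equation and Sobolev embedding; and second, track all constants' dependence on $\theta=\chi/\mu$ so that a single smallness threshold $\theta_0$ makes every absorption step work simultaneously at both stages. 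Getting the bookkeeping of these $\theta$-dependent constants to close consistently — rather than the individual estimates, which are routine — is where the real care is needed.
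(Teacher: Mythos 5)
Your proposal follows essentially the same route as the paper: a two-stage bootstrap in which one tests with $u^{p-1}$, controls the chemotactic term through the weighted maximal Sobolev regularity estimate \eqref{est_MJ1} and absorbs it into $-\mu\intO u^{p+1}$ using the smallness of $\theta=\chi/\mu$, handles the haptotactic term via Lemma \ref{lem2.3} against a bound on $\intO v^{p+1}$, and upgrades from $p_1$ to $p_2$ by feeding the $L^{p_1}$ bound on $u$ through the Neumann heat semigroup to improve the integrability of $v$. The only cosmetic difference is that the paper obtains the required bound on $\|v(\cdot,t)\|_{L^{p+1}(\Om)}$ at each stage directly from the $L^p$--$L^q$ semigroup estimates (using $u\in L^1$, then $u\in L^{p_1}$ with $p_1>\tfrac32$ giving $v\in L^\infty$) rather than from maximal regularity, and it never trades $\Delta v$ for $v_t+v-u$.
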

We are going to prove Lemma \ref{lem3.0} by several steps. Let us first provide an important ingredient for the estimate of $\|u(\cdot,t)\|_{L^p(\Om)}$. The next Lemma offers a general iteration step from $v$ to $u$. We will use it to first estimate $\|u(\cdot,t)\|_{L^{p_1}(\Om)}$ with $p_1\in(\frac32,2)$, and then $\|u(\cdot,t)\|_{L^{p_2}(\Om)}$ with $p_2\in(3,\infty)$. These results are under different conditions on $\frac{\chi}{\mu}$. For convenience, we assume that both of the conditions hold in Lemma \ref{lem3.0} such that Lemma \ref{lem3.1} is applicable for both $p_1$ and $p_2$.

\begin{lem}\label{lem3.1}
Let $\chi>0$, $\xi>0$, $\mu>0$, $p>1$. Then there exist constants $\theta_p>0$ and $C>0$ such that if $\theta=\frac{\chi}{\mu}<\theta_p$ and
\beaa\label{vp+1}
\intO v^{p+1}(\cdot,t)\le C\;\;\text{ for all }t\in(s_0,T_{\max}).
\eea
Then we have
\bea{3.3.0}
\intO u^{p}(\cdot,t)\le C\;\;\text{ for all }t\in(s_0,T_{\max}).
\eea
\end{lem}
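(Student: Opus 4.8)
The plan is to test the first equation in \eqref{2.0} with $u^{p-1}$ and derive a differential inequality for $y(t):=\intO u^p$. Multiplying by $u^{p-1}$ and integrating by parts gives
\[
\frac{1}{p}\frac{d}{dt}\intO u^p
=-(p-1)\intO u^{p-2}|\nabla u|^2
+(p-1)\chi\intO u^{p-1}\nabla u\cdot\nabla v
+(p-1)\xi\intO u^{p-1}\nabla u\cdot\nabla w
+\mu\intO u^p(1-u-w).
\]
The haptotaxis term is controlled directly by Lemma \ref{lem2.3}, which bounds it by a constant times $\intO u^p+\intO u^pv+(p-1)\xi\intO u^{p-1}|\nabla u|$; the last of these is absorbed into the dissipation $-\frac{4(p-1)}{p^2}\intO|\nabla u^{p/2}|^2$ via Young's inequality at the cost of another $\intO u^p$ term. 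The chemotaxis term is the delicate one: after writing it as $-\frac{p-1}{p}\chi\intO u^p\Delta v$ and using $-\Delta v=v-u-v_t$, one splits off $\frac{p-1}{p}\chi\intO u^{p+1}$, which is exactly the sign we want since $\mu\intO u^p(1-u-w)$ contributes $-\mu\intO u^{p+1}$; here the smallness of $\theta=\chi/\mu$ enters, since we need $\frac{p-1}{p}\chi<\mu$, i.e. $\theta<\frac{p}{p-1}=:\theta_p$ (possibly shrunk further to leave room for the other absorbed terms). The remaining pieces are $\frac{p-1}{p}\chi\intO u^pv$ (and from the haptotaxis estimate $M\xi\intO u^pv$) together with a term involving $v_t$.

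The term $\int u^p v_t$ is handled by integrating in time: $\frac{p-1}{p}\chi\intO u^p v_t = \frac{p-1}{p}\chi\frac{d}{dt}\intO u^p v - \frac{p-1}{p}\chi\intO p u^{p-1}u_t\, v$, and the second piece is expanded using the $u$-equation again; the chemotaxis contribution there produces a term one can integrate by parts and again absorb into gradient dissipation plus lower order terms. This is the standard "combine with a time-integrated identity" device; I would set up an auxiliary functional $\intO u^p + \frac{p-1}{p}\chi\intO u^pv$ rather than $\intO u^p$ alone. The upshot is a differential inequality of the form
\[
\frac{d}{dt}\Big(\intO u^p+c\intO u^pv\Big)
+\varepsilon\intO|\nabla u^{p/2}|^2
+\delta\intO u^{p+1}
\le C_1\intO u^pv+C_2\intO u^p+C_3,
\]
with $\varepsilon,\delta>0$ once $\theta<\theta_p$.

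Now the hypothesis $\intO v^{p+1}\le C$ is used to absorb the right-hand side. By Young's inequality, $C_1\intO u^pv\le \frac{\delta}{2}\intO u^{p+1}+C\intO v^{p+1}\le\frac{\delta}{2}\intO u^{p+1}+C'$, so this term is swallowed by the $u^{p+1}$ dissipation. Similarly $C_2\intO u^p\le\frac{\delta}{4}\intO u^{p+1}+C''$. Thus
\[
\frac{d}{dt}\Big(\intO u^p+c\intO u^pv\Big)+\frac{\delta}{4}\intO u^{p+1}\le C_4,
\]
and since $\intO u^p\le|\Om|^{1/(p+1)}\big(\intO u^{p+1}\big)^{p/(p+1)}$, the left-hand functional $Y(t):=\intO u^p+c\intO u^pv$ satisfies $Y'+a Y^{(p+1)/p}\le C_5$ after noting $\intO u^p v$ is controlled by $\intO u^{p+1}+\intO v^{p+1}$; an ODE comparison then gives $Y(t)\le C$ uniformly, hence $\intO u^p\le C$ on $(s_0,T_{\max})$. (Strictly one works on $(s_0,T_{\max})$ using $u(\cdot,s_0)\in L^\infty$ from \eqref{s0}, so $Y(s_0)$ is finite.)

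\textbf{Main obstacle.} The genuinely delicate point is the term coming from $v_t$: keeping track of it forces the auxiliary functional $\intO u^pv$, and when one differentiates $\intO u^pv$ one regenerates a chemotaxis term $\chi\intO v\,u^{p-1}\nabla u\cdot\nabla v$ that is not obviously absorbable — it is cubic in the unknowns. Handling it requires an extra Young/interpolation step (e.g. $|\nabla u^{p/2}|\cdot u^{p/2}|\nabla v|\cdot v$ split so that $|\nabla u^{p/2}|^2$ goes into dissipation and the rest into $u^{p+1}$ plus powers of $v$ and $|\nabla v|$), and the $|\nabla v|$ powers must themselves be controlled, which is presumably where a further bound on $v$ (or a bootstrap using $\intO v^{p+1}$ together with elliptic/parabolic regularity for the $v$-equation) is invoked. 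Pinning down exactly which norm of $\nabla v$ is available at this stage, and ensuring the exponents close, is the crux; everything else is bookkeeping with Young's inequality and an ODE comparison.
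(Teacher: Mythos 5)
There is a genuine gap. Your treatment of the haptotaxis term via Lemma \ref{lem2.3} and your reduction of the chemotaxis term to $-\frac{p-1}{p}\chi\intO u^p\Delta v$ both match the paper, but from there you take a route that does not close: substituting $\Delta v=v_t+v-u$ produces the term $-\frac{p-1}{p}\chi\intO u^p v_t$, and your device for handling it (the auxiliary functional $\intO u^p v$) regenerates, as you yourself note, a cubic term $\chi\intO v\,u^{p-1}\nabla u\cdot\nabla v$ whose absorption requires control of $\nabla v$ that is simply not available at this stage of the bootstrap --- the only hypothesis is $\intO v^{p+1}\le C$. This is precisely the obstruction that, in the fully parabolic setting, leads to smallness conditions on $\chi^2/\mu$ (as in \cite{W1}) rather than on $\chi/\mu$, so even if the exponents could be forced to close, you would not obtain the lemma as stated.

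The paper's key idea, absent from your sketch, is to avoid substituting the $v$-equation altogether: keep $k_p\chi^{p+1}\mu^{-p}\intO|\Delta v|^{p+1}$ after Young's inequality, integrate the resulting differential inequality in time by variation of constants (producing the weight $e^{\frac{p+1}{2}s}$), and then apply the weighted maximal Sobolev regularity estimate of Lemma \ref{maximal}, which gives
$\int_{s_0}^t\intO e^{\frac{p+1}{2}s}|\Delta v|^{p+1}\le C_{p+1}\int_{s_0}^t\intO e^{\frac{p+1}{2}s}u^{p+1}+C_{p+1}\|v(\cdot,s_0)\|_{W^{2,p+1}(\Om)}^{p+1}$.
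The $u^{p+1}$ contribution then carries the coefficient $pk_pC_{p+1}\chi^{p+1}\mu^{-p}=pk_pC_{p+1}\theta^{p+1}\mu$ and is absorbed by the time-integrated logistic damping $-\frac{\mu}{2}p\int_{s_0}^t\intO e^{\frac{p+1}{2}s}u^{p+1}$ precisely when $C_{p+1}k_p\theta^{p+1}\le\frac12$, which defines $\theta_p$. Note also that the correct smallness condition involves $\theta^{p+1}$ and the maximal regularity constant $C_{p+1}$, not the condition $\theta<\frac{p}{p-1}$ you propose, which comes from the parabolic--elliptic argument of Tello and Winkler and does not transfer here. Without the maximal regularity step your argument cannot be completed as written.
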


\begin{proof}
First we see that for any $a,b>0$, Young's inequality provides $k_p>0$ such that
  \begin{align}\label{young}
	ab\le\frac18 a^\frac{p+1}{p}+k_p b^{p+1}.
  \end{align}
  Let $C_{r+1}$ denote the constant from Lemma \ref{maximal} for $r\in(1,\infty)$.
  Now we can find $\theta_p>0$ small enough such that
  \begin{align}\label{thetap}
  C_{p+1}k_{p}\theta^{p+1}\le \frac12 \text{ for all } \theta<\theta_p.
  \end{align}
  Testing the first equation in \eqref{2.0} with $u^{p-1}$ ($p>1$) and integrating by part imply
  \begin{align}\nn
	&~~~~\frac{1}{p}\frac{d}{dt}\intO u^p+(p-1)\intO u^{p-2}|\nabla u|^2\\\nn
	&=(p-1)\chi\intO u^{p-1}\nabla u\cdot\nabla v
	+(p-1)\xi\intO u^{p-1}\nabla u\cdot\nabla w\\\nn
	&~~~~~~~~~~~~~~~~+\mu\intO u^p
	-\mu\intO u^{p+1}-\mu\intO u^pw\\\nn
	&\le \frac{p-1}{p}\chi\intO\nabla u^p\cdot\nabla v
	+(p-1)\xi\intO u^{p-1}\nabla u\cdot\nabla w+\mu\intO u^p-\mu\intO u^{p+1}\\\label{3.1.2}
	&\le-\frac{p-1}{p}\chi\intO u^p\Delta v+(p-1)\xi\intO u^{p-1}\nabla u\cdot\nabla w+\mu\intO u^p-\mu\intO u^{p+1}
  \end{align}
  for all $t\in(s_0,T_{\max})$. We see that \eqref{2.3} and \eqref{3.1.1} entail the existence of
  $c_3>0$
  satisfying
  \begin{align}\nn
	&~~~~(p-1)\xi\intO u^{p-1}\nabla u\cdot\nabla w\\\nn
	&\le c_3\xi\intO u^p+c_3\xi\intO u^pv+c_3p\xi\intO u^{p-1}|\nabla u|\\\nn
	&\le c_3\xi\intO u^p+\frac{\mu}{8}\intO u^{p+1}+k_pc_3^{p+1}\mu^{-p}\xi^{p+1}\intO v^{p+1}
	+\frac{p-1}{2}\intO u^{p-2}|\nabla u|^2+\frac{c_3^2\xi^2p^2}{2(p-1)} \intO u^p\\\label{3.1.3}
	&\le (c_3\xi+\frac{c_3^2\xi^2p^2}{2(p-1)})\intO u^p+\frac{\mu}{8}\intO u^{p+1}+\frac{p-1}{2}\intO u^{p-2}|\nabla u|^2+k_p\mu^{-p}\xi^{p+1}\|v(\cdot,t)\|_{L^{p+1}(\Om)}^{p+1}
  \end{align}
  for all $t\in(s_0,T_{\max})$.
  From \eqref{young}, we estimate for all $t\in(s_0,T_{\max})$,
  \begin{align}\label{3.1.4}
	-\frac{p-1}{p}\chi\intO u^p \Delta v \le \chi\intO u^p|\Delta v|\le \frac{\mu}{8}\intO u^{p+1}
	+k_p\chi^{p+1}\mu^{-p}\intO|\Delta v|^{p+1}.
  \end{align}
Inserting (\ref{3.1.3}) and (\ref{3.1.4}) into \eqref{3.1.2} and some rearrangement yield
  \begin{align}\nn
	&~~~~\frac{1}{p}\frac{d}{dt}\intO u^p+\frac{p-1}{2}\intO u^{p-2}|\nabla u|^2\\\nn
	&\le -\frac{3}{4}\mu\intO u^{p+1}+\left(c_3\xi+\frac{c_3^2\xi^2p^2}{2(p-1)}+\mu\right)\intO u^p
	+k_p\mu^{-p}\chi^{p+1}\intO |\Delta v|^{p+1}
	+ k_p\mu^{-p}\xi^{p+1}\|v(\cdot,t)\|_{L^{p+1}(\Om)}^{p+1}\\\nn
	&= -\frac{p+1}{2p}\intO u^p+\left(\frac{p+1}{2p}+c_3\xi+\frac{c_3^2\xi^2p^2}{2(p-1)}+\mu\right)\intO u^p-\frac{3}{4}\mu\intO u^{p+1}
	\\\label{3.1.6}
    &~~~~~~~~~~~~~~~~~~~~~~~~~~~+k_p\chi^{p+1}\mu^{-p}\intO |\Delta v|^{p+1}+k_p\mu^{-p}\xi^{p+1}\|v(\cdot,t)\|_{L^{p+1}(\Om)}^{p+1}
  \end{align}
  for all $t\in(s_0,T_{\max})$. We again apply Young's inequality to obtain that
  \beaa\label{3.1.7}
	\left(\frac{p+1}{2p}+c_3\xi+\frac{c_3^2\xi^2p^2}{2(p-1)}+\mu\right)\intO u^p\le\frac{\mu}{4}\intO u^{p+1}+c_4(\mu,\xi,p),
  \eea
  where $c_5>0$
  depends on $\mu,\xi,p$. Upon \eqref{3.1.6} and \eqref{3.1.7}, we infer that
  \begin{align}\nn
	\frac d{dt}\intO u^p&\le -\left(\frac{p+1}2\right)\intO u^p-\frac{\mu}{2}p\intO u^{p+1}
	+p k_p\chi^{p+1}\mu^{-p}\intO |\Delta v|^{p+1}\\\nn
    &~~~~~~~~~~~~~~~~~~~~~+c_4(\mu,\xi,p)p+p k_p\mu^{-p}\xi^{p+1}\|v(\cdot,t)\|_{L^{p+1}(\Om)}^{p+1}
  \end{align}
  for all $t\in(s_0,T_{\max})$. Applying Gronwall inequality in different form to the above inequality shows that
  \begin{align}\nn
	\intO u^p(\cdot,t)&\le e^{-\left(\frac{p+1}2\right)(t-s_0)}\intO u^p(\cdot,s_0)-\frac{\mu}{2}p\int_{s_0}^te^{-\left(\frac{p+1}2\right)(t-s)}\intO u^{p+1}(\cdot,s)ds\\\nn
	&~~~~+kp\chi^{p+1}\mu^{-p}\int_{s_0}^t e^{-\left(\frac{p+1}2\right)(t-s)}\intO |\Delta v(\cdot,s)|^{p+1}ds+\int_{s_0}^t e^{-\left(\frac{p+1}2\right)(t-s)}(c_4(\mu,\xi,p)\\\nn
&~~~~~~~~~~~~+k_p\mu^{-p}\xi^{p+1}\|v(\cdot,t)\|_{L^{p+1}(\Om)}^{p+1})ds\\\nn
	&= e^{-\left(\frac{p+1}2\right)(t-s_0)}\intO u^p(\cdot,s_0)-\frac{\mu}{2}pe^{-\left(\frac{p+1}2\right)t}\int_{s_0}^t\intO e^{\left(\frac{p+1}2\right)s}u^{p+1}(\cdot,s)\\\label{3.1.8}
	&~~~~+pk_p\chi^{p+1}\mu^{-p}e^{-\left(\frac{p+1}2\right)t}\int_{s_0}^t\intO e^{\left(\frac{p+1}2\right)s}|\Delta v(\cdot,s)|^{p+1}\\\nn
&~~~~~~~~~~~~+\frac{p}{p+1}\left(c_4(\mu,\xi,p)+k_p\mu^{-p}\xi^{p+1}\|v(\cdot,t)\|_{L^{p+1}(\Om)}^{p+1}\right)
  \end{align}
  for all $t\in({s_0},T_{\max})$. In order to estimate the third term therein, let us note that
   an application of Lemma \ref{maximal} results in
  \begin{align}\nn
	&~~~~p k_p \chi^{p+1}\mu^{-p}e^{-\left(\frac{p+1}2\right)t}\int_{s_0}^t \intO e^{\left(\frac{p+1}2\right)s} |\Delta v|^{p+1}\\\nn
	&\le pk_pC_{p+1}\chi^{p+1}\mu^{-p}e^{-\left(\frac{p+1}2\right)t}\int_{s_0}^t\intO e^{\left(\frac{p+1}2\right)s} u^{p+1} +C_{p+1}pk_p\chi^{p+1}\mu^{-p}e^{-\left(\frac{p+1}2\right)t}
\|v(\cdot,s_0)\|_{W^{2,p+1}(\Omega)}^{p+1}\\\label{3.1.9}	&=pk_pC_{p+1}\chi^{p+1}\mu^{-p}e^{-\left(\frac{p+1}2\right)t}\int_{s_0}^t\intO e^{\left(\frac{p+1}2\right)s} u^{p+1} +C_{p+1}pk_p\chi^{p+1}\mu^{-p}e^{-\left(\frac{p+1}2\right)t}M^{p+1}
  \end{align}
  for all $t\in(s_0,T_{\max})$  and $M$ as in (\ref{s0}).
  Since $\theta=\frac{\chi}{\mu}$, combining (\ref{3.1.8}-\ref{3.1.9}), we finally arrive at
  \beaa\nn
	\intO u^p(\cdot,t)&\le& e^{-\left(\frac{p+1}2\right)(t-s_0)}\intO u^p(\cdot,s_0)-\mu p\left(\frac{1}{2}
	-k_pC_{p+1}\theta^{p+1}\right)e^{-\left(\frac{p+1}2\right)t}\int_{s_0}^t\intO e^{\left(\frac{p+1}2\right)s} u^{p+1}\\\nn &&~~~~+C_{p+1}pk_p\chi^{p+1}\mu^{-p}e^{-\left(\frac{p+1}2\right)t}M^{p+1}\\\label{3.2.0}
&&~~~~~~~~~~+\frac{p}{p+1}\left(c_4(\mu,\xi,p)
+k_p\mu^{-p}\xi^{p+1}\|v(\cdot,t)\|_{L^{p+1}(\Om)}^{p+1}\right)
  \eea
  for all $t\in(s_0,T_{\max})$. We see from (\ref{vp+1}) and the condition on $\theta_p$ in \eqref{thetap}
  that
  \beaa
	\intO u^{p}(\cdot,t) \le C(\mu,\chi,\xi,p,\|v(\cdot,s_0)\|_{W^{2,p+1}(\Omega)})
  \eea
  for all $t\in(s_0,T_{\max})$ upon obvious choice of $C>0$. Thus the assertion is derived.
\end{proof}
Now we can apply Lemma \ref{lem3.1} to improve the regularity of $u$ from $L^1(\Om)$ to $L^{p_1}(\Om)$.
\begin{lem}\label{lem3.2}
Let $p_1\in(\frac 32,2)$ and $\theta<\theta_{p_1}$, there is a constant $C>0$ such that
\begin{align}\label{up1}
\|u(\cdot,t)\|_{L^{p_1}(\Om)}\le C\;\;\text{ for all } t\in(s_0,T_{\max}).
\end{align}
\end{lem}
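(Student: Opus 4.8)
\noindent\emph{Proof strategy.} The plan is to obtain \eqref{up1} directly from Lemma \ref{lem3.1} applied with $p=p_1$. The hypothesis of that lemma asks for an a priori bound of the form $\intO v^{p_1+1}(\cdot,t)\le C$ on $(s_0,T_{\max})$, and since $p_1\in(\tfrac32,2)$ forces $p_1+1<3$, I expect such a bound to be available from the elementary mass control $\intO u(\cdot,t)\le C$ furnished by Lemma \ref{lem2.4}, combined with standard smoothing estimates for the Neumann heat semigroup $(e^{\sigma\Delta})_{\sigma\ge 0}$ on the bounded domain $\Om\subset\mathbb{R}^3$.

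Concretely, I would write the second equation of \eqref{2.0} in its variation-of-constants form
\[
v(\cdot,t)=e^{t(\Delta-1)}v_0+\int_0^t e^{(t-s)(\Delta-1)}u(\cdot,s)\,ds ,
\]
and then invoke the $L^1$--$L^{p_1+1}$ estimate
\[
\|e^{\sigma(\Delta-1)}\varphi\|_{L^{p_1+1}(\Om)}\le c_1\bigl(1+\sigma^{-\frac32\left(1-\frac1{p_1+1}\right)}\bigr)e^{-\sigma}\|\varphi\|_{L^1(\Om)},\qquad\sigma>0 .
\]
The decisive point is that the singularity exponent $\frac32\bigl(1-\frac1{p_1+1}\bigr)=\frac32\cdot\frac{p_1}{p_1+1}$ is strictly smaller than $1$ precisely because $p_1<2$, so that $\sigma\mapsto\bigl(1+\sigma^{-\frac32(1-\frac1{p_1+1})}\bigr)e^{-\sigma}$ is integrable over $(0,\infty)$. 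Combining this with $\|u(\cdot,s)\|_{L^1(\Om)}\le C$ from Lemma \ref{lem2.4} and with the trivial bound $\|e^{t(\Delta-1)}v_0\|_{L^{p_1+1}(\Om)}\le|\Om|^{\frac1{p_1+1}}\|v_0\|_{L^\infty(\Om)}$ then yields
\[
\intO v^{p_1+1}(\cdot,t)\le C\qquad\text{for all }t\in(0,T_{\max}),
\]
hence in particular for all $t\in(s_0,T_{\max})$.

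With this preliminary bound at hand, Lemma \ref{lem3.1}, used with $p=p_1$ and under the standing hypothesis $\theta<\theta_{p_1}$ of the present lemma, immediately delivers $\intO u^{p_1}(\cdot,t)\le C$ on $(s_0,T_{\max})$, which is exactly \eqref{up1}; the dependence of the constant on $\|v(\cdot,s_0)\|_{W^{2,p_1+1}(\Om)}$ appearing there is harmless because of \eqref{s0}.

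Since Lemmas \ref{lem2.4} and \ref{lem3.1} are already available, the argument should be short, and the only genuinely constraining point is the one just emphasized: the semigroup integral converges uniformly in $t$ only when $p_1+1<3$, i.e.\ only in space dimension $N=3$ (where $\tfrac{N}{(N-2)_+}=3$) and for $p_1<2$. The lower restriction $p_1>\tfrac32$ plays no role here; it will be needed afterwards, when passing from an $L^{p_1}$ bound on $u$ to an $L^{p_2}$ bound with $p_2>3$, since only then does $u\in L^{p_1}(\Om)$ with $2p_1>N$ push $v$ into every $L^q(\Om)$ and ultimately force $\nabla v$ to be bounded.
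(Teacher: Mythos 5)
Your proposal is correct and follows essentially the same route as the paper: write $v$ by the variation-of-constants formula, use the $L^1$--$L^{p_1+1}$ Neumann heat semigroup estimate together with the mass bound from Lemma \ref{lem2.4} (the singularity exponent $\tfrac32\bigl(1-\tfrac1{p_1+1}\bigr)<1$ precisely because $p_1<2$) to get $\intO v^{p_1+1}\le C$, and then apply Lemma \ref{lem3.1} with $p=p_1$. Your added remark that the restriction $p_1>\tfrac32$ is only needed later (for the $L^\infty$ bound on $v$ via $\|u\|_{L^{p_1}}$) is also accurate.
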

\begin{proof}
First we use the representation formula for $v$,
  \begin{align}
  v(\cdot,t)=e^{t\Delta}v_0+\int_{0}^t e^{(t-s)(\Delta-1)}u(\cdot,s)ds
  \end{align}
  for all $t\in(0,T_{\max})$. Let $p_1\in(\frac32,2)$, the $L^p-L^q$ estimates for the Neumann heat
  semigroup  \cite[Lemma1.3]{W4} and (\ref{2.1.0}) allow us to pick constants $c_1>0$ and $c_2>0$ such that
  \begin{align}\nn
	\|v(\cdot,t)\|_{L^{p_1+1}(\Om)}&\le \|e^{t(\Delta-1)}v_0\|_{L^{p_1+1}(\Om)}+\int_0^t\|e^{(t-s)(\Delta-1)}u
	(\cdot,s)\|_{L^{p_1+1}(\Om)}ds\\\nn
	&\le e^{-t}\|v_0\|_{L^{p_1+1}(\Om)}+c_1\int_0^t(1+(t-s)^{-\frac{3}{2}(1-\frac{1}{p_1+1})})e^{-(t-s)}
	\|u(\cdot,s)\|_{L^1(\Om)}ds\\\label{3.1.1}
	&\le c_2
  \end{align}
  for all $t\in(0,T_{\max})$. An application of Lemma \ref{lem3.1} and (\ref{3.1.1}) lead to (\ref{up1}).
\end{proof}
With the higher regularity for $v$ obtained in the above lemma, a similar reasoning for Lemma \ref{lem3.1}
will provide us higher regularity for $u$.
Now we are ready to prove Lemma \ref{lem3.0}.

\begin{proof}[Proof of Lemma \ref{lem3.0}]
Let $p_1$ be choosen as in Lemma \ref{lem3.1} and $p_2\in(3,\infty)$, define $\theta_0=\min\{\theta_{p_1},\theta_{p_2}\}$.
According to Lemma \ref{lem3.1} and standard estimates for Neumann heat semigroup generated by $\Delta$,
we see that
\begin{align}\nn
\|v(\cdot,t)\|_{L^\infty(\Om)}&\le \|e^{t(\Delta-1)}v(\cdot,s_0)\|_{L^\infty(\Om)}
+\int_0^t\|e^{(t-s)(\Delta-1)}u(\cdot,s)\|_{L^\infty(\Om)}ds\\\label{v}
&\le e^{-t}\|v(\cdot,s_0)\|_{L^\infty(\Om)}+c_1\int_{s_0}^t(1+(t-s)^{-\frac 3{2p_1}})
e^{-(t-s)}\|u(\cdot,s)\|_{L^{p_1}(\Om)}ds
\end{align}
for all $t\in(s_0,T_{\max})$ with some $c_1>0$. Lemma \ref{lem3.2} guarantees the boundedness of the above estimate. Also H\"older's inequality yields that
\bea{vinfty}
\|v(\cdot,t)\|_{L^{p_2+1}(\Om)}<c_1, \;\;\;\; t\in(s_0,T_{\max}).
\eea
Due to the definition of $\theta_0$ and (\ref{vinfty}), now we can apply Lemma \ref{lem3.1} to find the existence of a constant
$C(\mu,\chi,\xi,p_2,\|v(\cdot,s_0)\|_{W^{2,p_2+1}(\Omega)})>0$ such that
\beaa\nn
\intO u^{p_2}(\cdot,t)\le C(\mu,\chi,\xi,p_2,\|v(\cdot,s_0)\|_{W^{2,p_2+1}(\Omega)})
\eea
for all $t\in({s_0},T_{\max})$. Thus \eqref{3.3.0}
 is obtained.
\end{proof}
An immediate consequence of Lemma \ref{lem3.0} is that $\nabla v$ is bounded with respect to the norm in $L^\infty(\Om)$.

\begin{lem}\label{lem3.4}
Under the assumptions of Lemma \ref{lem3.1}, for some $s_0\in(0,T_{\max})$, there is $C>0$ such that
\begin{align}\label{nablav}
\|\nabla v(\cdot,t)\|_{L^\infty(\Om)}<C\;\;\text{ for all } t\in(s_0,T_{\max}).
\end{align}
\end{lem}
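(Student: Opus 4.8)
The plan is to combine the variation-of-constants representation of $v$ with the smoothing estimates for the Neumann heat semigroup, exploiting the crucial fact that Lemma~\ref{lem3.0} controls $u$ in $L^{p_2}(\Om)$ with $p_2>3=N$. First I would write, for $t\in(s_0,T_{\max})$,
\[
v(\cdot,t)=e^{(t-s_0)(\Delta-1)}v(\cdot,s_0)+\int_{s_0}^t e^{(t-s)(\Delta-1)}u(\cdot,s)\,ds ,
\]
and apply $\nabla$ to both sides, so that $\|\nabla v(\cdot,t)\|_{L^\infty(\Om)}$ is bounded by the sum of the two resulting contributions.

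For the first contribution I would use that, by Lemma~\ref{loc}, $v(\cdot,s_0)\in C^2(\bar\Om)\subset W^{1,\infty}(\Om)$ with $\tfrac{\partial v}{\partial\nu}(\cdot,s_0)=0$ on $\partial\Om$; the gradient estimate for the Neumann heat semigroup (\cite[Lemma 1.3]{W4}) then gives $\|\nabla e^{(t-s_0)(\Delta-1)}v(\cdot,s_0)\|_{L^\infty(\Om)}\le c\,e^{-(t-s_0)}\|\nabla v(\cdot,s_0)\|_{L^\infty(\Om)}$, which is bounded uniformly in $t$ (the bound being allowed to depend on $s_0$). For the second contribution I would invoke the smoothing property
\[
\|\nabla e^{\sigma(\Delta-1)}z\|_{L^\infty(\Om)}\le c_1\bigl(1+\sigma^{-\frac12-\frac{3}{2p_2}}\bigr)e^{-\sigma}\|z\|_{L^{p_2}(\Om)}\qquad(\sigma>0)
\]
together with $\|u(\cdot,s)\|_{L^{p_2}(\Om)}\le C$ from Lemma~\ref{lem3.0} (valid on $(s_0,T_{\max})$), which yields
\[
\Bigl\|\nabla\!\int_{s_0}^t e^{(t-s)(\Delta-1)}u(\cdot,s)\,ds\Bigr\|_{L^\infty(\Om)}\le c_1 C\int_{s_0}^t\bigl(1+(t-s)^{-\frac12-\frac{3}{2p_2}}\bigr)e^{-(t-s)}\,ds\le c_1 C\int_0^\infty\bigl(1+\sigma^{-\frac12-\frac{3}{2p_2}}\bigr)e^{-\sigma}\,d\sigma .
\]
Adding the two bounds then produces a constant $C$ as claimed in \eqref{nablav}.

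The only point requiring attention is the finiteness of the last integral near $\sigma=0$, i.e. that $\tfrac12+\tfrac{3}{2p_2}<1$, equivalently $p_2>3=N$; this is precisely the range of exponents secured by Lemma~\ref{lem3.0}, and is the reason why that lemma (hence the two successive applications of Lemma~\ref{lem3.1}) had to be pushed to a supercritical $p_2\in(3,\infty)$. Beyond this bookkeeping there is no genuine obstacle: once $u$ is known to be bounded in some $L^{p_2}(\Om)$ with $p_2>N$, the $L^\infty$-bound for $\nabla v$ is a routine consequence of parabolic smoothing.
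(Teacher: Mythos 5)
Your proposal is correct and follows essentially the same route as the paper: the variation-of-constants formula for $v$ starting at $s_0$, the gradient smoothing estimate for the Neumann semigroup with the singularity exponent $\tfrac12+\tfrac{3}{2p_2}<1$, and the $L^{p_2}$-bound on $u$ from Lemma \ref{lem3.0}. Your explicit remark on why $p_2>3$ is needed for the time-integral to converge is exactly the point the paper leaves implicit.
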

\begin{proof}
We note that the standard estimate for Neumann semigroup provides $c_1>0$ such that
\begin{align}\nn
\|\nabla v(\cdot,t)\|_{L^\infty(\Om)}&\le\|\nabla e^{t(\Delta-1)}v(\cdot,s_0)\|_{L^\infty(\Om)}
+\int_{s_0}^t\|\nabla e^{(t-s)(\Delta-1)}u(\cdot,s)\|_{L^\infty(\Om)}\\\nn
&\le e^{-t}\|\nabla v(\cdot,s_0)\|_{L^\infty(\Om)}+\int_{s_0}^t c_1(1+(t-s)^{-\frac{1}{2}-\frac{3}{2p_2}})e^{-(t-s)}\|u(\cdot,s)\|_{L^{p_2}(\Om)}ds
\end{align}
for all $t\in(0,T_{\max})$. In light of Lemma \ref{lem3.0}, we see the conclusion is established.
\end{proof}

Now we are in a position to prove boundednes of $u$. Since the estimate of $\nabla w$ is nonlocal in time, the Moser iteration procedure is still necessary. Upon which,
we can finally show that $u$ is bounded.
\begin{lem}\label{lem3.5}
Under the assumptions of Lemma \ref{lem3.1}, for some $s_0\in(0,T_{\max})$, there is $C>0$ such that
\begin{align}
\|u(\cdot,t)\|_{L^\infty(\Om)}\le C\;\;\text{ for all } t\in(s_0,T_{\max}).
\end{align}
\end{lem}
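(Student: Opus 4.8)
The plan is to run a Moser-type iteration on the powers $p=p_k:=2^k$ (starting from $p_0$ large enough that $p_0>p_2$, so that Lemma~\ref{lem3.0} already gives $\|u(\cdot,t)\|_{L^{p_0}(\Om)}\le C$), and to derive a recursive bound for $M_k:=\sup_{t\in(s_0,T_{\max})}\|u(\cdot,t)\|_{L^{p_k}(\Om)}$ that remains finite as $k\to\infty$. The starting point is once more the differential inequality obtained by testing the first equation in \eqref{2.0} with $u^{p-1}$, exactly as in \eqref{3.1.2}: for $p>1$,
\begin{align*}
\frac1p\frac{d}{dt}\intO u^p+(p-1)\intO u^{p-2}|\nabla u|^2
\le (p-1)\chi\intO u^{p-1}\nabla u\cdot\nabla v+(p-1)\xi\intO u^{p-1}\nabla u\cdot\nabla w+\mu\intO u^p-\mu\intO u^{p+1}.
\end{align*}
The chemotaxis term is now harmless: since $\|\nabla v(\cdot,t)\|_{L^\infty(\Om)}\le C$ by Lemma~\ref{lem3.4}, Young's inequality absorbs it as $\chi(p-1)\intO u^{p-1}|\nabla u|\,|\nabla v|\le\frac{p-1}{4}\intO u^{p-2}|\nabla u|^2+C(p-1)\intO u^p$, at the cost of a factor polynomial in $p$. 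For the haptotaxis term I would invoke Lemma~\ref{lem2.3} together with the fact that $v$ is now bounded (from \eqref{v}), so that $M\xi\intO u^pv\le C\intO u^p$; the remaining $\intO u^{p-1}|\nabla u|$ is again split by Young into $\frac{p-1}{4}\intO u^{p-2}|\nabla u|^2+C(p-1)\intO u^p$.

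After absorbing both taxis terms into the good dissipation term and into a multiple of $\intO u^p$, and after discarding $-\mu\intO u^{p+1}\le0$, one is left with
\begin{align*}
\frac{d}{dt}\intO u^p+\frac{p-1}{2}\intO u^{p-2}|\nabla u|^2\le C p^{\,\beta}\intO u^p
\end{align*}
for some fixed exponent $\beta$ independent of $p$ (I expect $\beta=2$ or $3$, coming from the $\frac{p^2}{p-1}$-type coefficients in \eqref{3.1.3} and from the Young constants). Writing $\intO u^{p-2}|\nabla u|^2=\frac{4}{p^2}\intO|\nabla u^{p/2}|^2$ and adding $\intO u^p$ to both sides, the Gagliardo--Nirenberg inequality in $\R^3$, $\|\varphi\|_{L^2}^2\le C_{GN}\big(\|\nabla\varphi\|_{L^2}^{2a}\|\varphi\|_{L^1}^{2(1-a)}+\|\varphi\|_{L^1}^2\big)$ with $\varphi=u^{p/2}$ and a suitable $a\in(0,1)$, lets me bound $\intO u^p=\|\varphi\|_{L^2}^2$ by a small multiple of $\intO|\nabla u^{p/2}|^2$ plus a constant times $\big(\intO u^{p/2}\big)^2$. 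Choosing the small multiple to beat the $\frac{2}{p^2}$ in front of the gradient term, I arrive at
\begin{align*}
\frac{d}{dt}\intO u^p+\intO u^p\le C p^{\,\gamma}\Big(\intO u^{p/2}\Big)^2,
\end{align*}
with $\gamma$ again independent of $p$; an ODE comparison then gives $\intO u^{p}(\cdot,t)\le\max\big\{\intO u^p(\cdot,s_0),\;C p^{\,\gamma}\big(\sup_t\intO u^{p/2}\big)^2\big\}$, i.e.\ a closed recursion $M_{p_k}^{p_k}\le\max\{\,\cdot\,,\;C p_k^{\gamma}M_{p_{k-1}}^{2\cdot p_{k-1}}\}$, which upon taking $p_k$-th roots is the standard Moser recursion $M_{p_k}\le (C p_k^{\gamma})^{1/p_k}M_{p_{k-1}}$; since $\sum 2^{-k}<\infty$ and $\sum k\,2^{-k}<\infty$, the product $\prod_k (C p_k^\gamma)^{1/p_k}$ converges, and hence $\sup_k M_{p_k}<\infty$, which by taking $k\to\infty$ yields $\|u(\cdot,t)\|_{L^\infty(\Om)}\le C$ for all $t\in(s_0,T_{\max})$.

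The main obstacle I anticipate is bookkeeping the $p$-dependence of all constants carefully enough to see that the exponent $\gamma$ (equivalently $\beta$) is bounded uniformly in $p$ — in particular verifying that the Gagliardo--Nirenberg step in three dimensions does not force $a$ to approach $1$ as $p\to\infty$ (it does not: with $\varphi=u^{p/2}$, $n=3$, the relevant exponent pairing gives $a=\frac{3}{5}<1$ uniformly, which is exactly why the dissipation $\frac{2}{p^2}\intO|\nabla u^{p/2}|^2$, though it degenerates, still dominates after the $L^1\!\to\!L^2$ interpolation because the $L^1$ norm $\intO u^{p/2}$ is controlled at the previous step). A secondary technical point is that the one-sided estimate for $\Delta w$ in Lemma~\ref{delta w}, and hence Lemma~\ref{lem2.3}, is nonlocal in time through the factor $\int_{s_0}^t\nabla v$, but since $\nabla v$ and $v$ are now \emph{bounded} (Lemmas~\ref{lem3.4} and the bound \eqref{v}), Lemma~\ref{lem2.3} delivers precisely the clean form $(p-1)\xi\intO u^{p-1}\nabla u\cdot\nabla w\le C\intO u^p+2M(p-1)\xi\intO u^{p-1}|\nabla u|$ with $C$ polynomial in $p$, so nonlocality causes no real difficulty here. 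Finally, combining Lemma~\ref{lem3.5} with the extensibility criterion in Lemma~\ref{loc} forces $T_{\max}=\infty$, and then standard parabolic regularity (Schauder estimates applied to the $u$- and $v$-equations, and the explicit representation \eqref{2.1} for $w$) upgrades the bound to global boundedness in $\Om\times(0,\infty)$, completing the proof of Theorem~\ref{th1}.
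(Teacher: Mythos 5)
Your proposal is correct and follows essentially the same route as the paper: the same testing with $u^{p-1}$, absorption of the chemotaxis and haptotaxis terms using the $L^\infty$ bounds on $v$ and $\nabla v$ together with Lemma \ref{lem2.3}, the Gagliardo--Nirenberg interpolation applied to $u^{p/2}$, and the standard Moser recursion for $p_k=2^k$ closed by an ODE comparison. The only cosmetic difference is your choice of starting exponent ($p_0>p_2$ versus the paper's iteration anchored at $M_0$ controlled by the mass bound), which does not affect the argument.
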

\begin{proof}
We first see Lemma \ref{lem3.2} and Lemma \ref{lem3.4} imply the existence of $c_1>0$ such that
\begin{align}
\|v(\cdot,t)\|_{L^\infty(\Om)}+\|\nabla v(\cdot,t)\|_{L^\infty(\Om)}<c_1
\end{align}
for all $t\in(s_0,T_{\max})$. Testing the first equation in \eqref{2.0} with $u^{p-1}$ ($p>1$), using (\ref{2.3}), (\ref{nablav}), (\ref{v})
 and Young's inequality, we can find constants $c_2,c_3>0$ such that
\begin{align}\nn
&~~~~\frac1p\frac{d}{dt}\intO u^p+(p-1)\intO u^{p-2}|\nabla u|^2\\\nn
&=(p-1)\chi\intO u^{p-1}\nabla u\cdot\nabla v
+(p-1)\xi\intO u^{p-1}\nabla u\cdot\nabla w+\mu\intO u^p-\mu\intO u^{p+1}-\mu\intO u^pw\\\nn
&\le (p-1)\chi\intO u^{p-1}\nabla u\cdot\nabla v
+c_2\xi\intO u^p+c_2\xi\intO u^pv+c_2\xi\intO u^{p-1}|\nabla u|+\mu\intO u^p\\\nn
&\le \frac{p-1}{4}\intO u^{p-2}|\nabla u|^2+(p-1)\chi^2\intO u^p|\nabla v|^2+c_2\xi\intO u^p
+c_1c_2\xi\intO u^p\\\nn
&~~~~~~~~~~~~+\frac{p-1}{4}\intO u^{p-2}|\nabla u|^2+c_2^2\xi^2p\intO u^p+\mu\intO u^p\\\nn
&\le \frac{p-1}{2}\intO u^{p-2}|\nabla u|^2+c_3p\intO u^p
\end{align}
for all $t\in(s_0,T_{\max})$. An obvious rearrangement implies
\begin{align}\label{3.5.1}
\frac{d}{dt}\intO u^p+c_4\intO |\nabla u^{\frac{p}{2}}|^2\le c_5 p^2\intO u^p
\end{align}
for all $t\in(s_0,T_{\max})$ where $c_4,c_5>0$ are independent of $p$.
Next, we use (\ref{3.5.1}) to perform the classical
 Moser iteration procedure \cite{alikakos} to
obtain the boundedness of $u$.

Let $p_k=2^k$, $k\in \mathbb{N}$ and $M_k:=\mathop{\sup}\limits_{t\in(s_0,T_{\max})}\intO u^{p_k}(\cdot,t)$.  Since $p_k\ge 1$,
 it is easy to find $c_6>0$ such that
\begin{align}\label{th1.1}
\frac{d}{dt}\intO u^{p_k}+\intO u^{p_k}+c_4\intO|\nabla u^{\frac{p_k}{2}}|^2\le c_5{p_k}^2\intO u^{p_k}
+\intO u^{p_k}
\le c_6{p_k}^2\intO u^{p_k}
\end{align}
for all $t\in(s_0,T_{\max})$. By means of the Gargliardo-Nirenberg inequality, we get that
\begin{align}\nn
\intO u^{p_k}=\|u^{\frac{p_k}{2}}\|_{L^2(\Om)}^2\le c_7\|\nabla u^{\frac{p_k}{2}}\|_{L^2(\Om)}^{2a}
\|u^{\frac{p_k}{2}}\|_{L^1(\Om)}^{2(1-a)}+c_7\|u^{\frac{p_k}{2}}\|_{L^1(\Om)}^2
\end{align}
with $a=\frac{\frac{N}{2}}{1+\frac{N}{2}}\in (0,1)$ and some $c_7>0$ independent of $k$.
Young's inequality and the definition of $p_k$ ensure that there are $c_8>0$ and $b>0$
satisfying
\begin{align}\nn
c_6p_k^2\intO u^{p_k}&\le c_4\intO|\nabla u^{\frac{p_k}{2}}|^2+c_8\left(p_k^2\right)^{\frac{1}{1-a}}
\left(\intO u^{p_{k-1}}\right)^2
+c_6c_7p_k^2\left(\intO u^{p_{k-1}}\right)^2\\\label{th1.2}
&\le c_4\intO|\nabla u^{\frac{p_k}{2}}|^2+b^k M_{k-1}^2.
\end{align}
Combining (\ref{th1.1}-\ref{th1.2}) we find that
\begin{align}\nn
\frac{d}{dt}\intO u^{p_k}+\intO u^{p_k}\le b^k M_{k-1}^2.
\end{align}
for all $t\in(s_0,T_{\max})$. The comparison theorem for the above ODE yields
\begin{align}\nn
M_k\le \max\{b^k M_{k-1}^2,\intO u^{p_k}(\cdot,s_0)\}.
\end{align}
If $b^k M_{k-1}^2<\intO u^{p_k}(\cdot,s_0)$ is valid for infinite $k$, (\ref{lem3.5}) is already derived.
 Otherwise, a direct induction entails
 \begin{align}\nn
 M_k\le b^k M_{k-1}^2\le b^{{\sum_{j=1}^{j=k-1}}2^j(k-j)}M_0^{2k}.
 \end{align}
Taking $2^k$-th root on both sides leads to the assertion.
\end{proof}
Now we are ready to prove Theorem \ref{th1}.
\begin{proof}[Proof of Theorem \ref{th1}]
First we see that the boundedness of $u$ and $v$ follow from Lemma \ref{lem3.5},
Lemma \ref{lem3.2} and (\ref{s0}). Thereupon the assertion of Theorem \ref{th1}
is immediately
obtained from Lemma \ref{loc}.
\end{proof}

\section{Acknowledgement}
I would like to thank two anonymous referees for carefully reading and giving many suggestions to improve the manuscript.
{\small
}

\end{document}